\documentclass[preprint,11pt,3p]{elsarticle}
\usepackage{amsmath,amsthm,amsfonts,amssymb,arydshln}
\usepackage{xcolor}
\usepackage{algorithm}
\usepackage{multirow}
\usepackage{multicol}
\usepackage{booktabs}
\usepackage{algorithmic}
\usepackage{comment}
\usepackage{threeparttable}
\usepackage{tikz}
\usetikzlibrary{arrows.meta, positioning}

\usepackage{mathrsfs}
\usepackage{sectsty}
\definecolor{astral}{RGB}{46,116,181}
\sectionfont{\color{astral}}
\newtheorem{theorem}{Theorem}[section]
\newtheorem{lemma}[theorem]{Lemma}
\newtheorem{corollary}[theorem]{Corollary}

\newtheorem{definition}[theorem]{Definition}

\newtheorem{remark}[theorem]{Remark}
\usepackage{hyperref}
\usepackage{subfigure}
\usepackage{scalerel}
\usepackage{stackengine,wasysym}

\usepackage{tikz}
\pgfdeclarelayer{bg} 
\pgfsetlayers{bg,main}

\usepackage{tikz,xcolor}
\definecolor{lime}{HTML}{A6CE39}
\definecolor{lightblue}{rgb}{0.0, 0.0, 0.5}
\DeclareRobustCommand{\orcidicon}{%
	\begin{tikzpicture}
	\draw[lime, fill=lime] (0,0)
	circle [radius=0.16]
	node[white] {{\fontfamily{qag}\selectfont \tiny ID}};
	\draw[white, fill=white] (-0.0625,0.095)
	circle [radius=0.007];
	\end{tikzpicture}
	\hspace{-2mm}
}
\foreach \x in {A, ..., Z}{%
	\expandafter\xdef\csname orcid\x\endcsname{\noexpand\href{https://orcid.org/\csname orcidauthor\x\endcsname}{\noexpand\orcidicon}}
}

\newcommand{\D}{{\mathrm D}}
\newcommand{\F}{{\mathrm F}}

\newcommand{\Ind}{{\mathrm {Ind}}}

\begin{document}
\begin{frontmatter}
\title{ 
On the Dual Drazin Inverse of Adjacency Matrices of Dual-number-Weighted Digraphs
}
\author[a]{Yue Zhao}
\ead{yuezhao0303@163.com}

\author[b]{Daochang Zhang\corref{cor1}}
\ead{daochangzhang@126.com}
\cortext[cor1]{Corresponding author.}

\author[a]{Zhongshan Li}
\ead{zli@gsu.edu}

\author[a]{Frank J. Hall}
\ead{fhall@gsu.edu}

\address[a]{Department of Math \& Stat, Georgia State University, Atlanta, GA 30303, USA}
\address[b]{College of Sciences, Northeast Electric Power University, Jilin, P.R. China}

\begin{abstract}
The motivation of this paper is to investigate the dual Drazin inverse of adjacency matrices arising from several classes of connected dual-number-weighted digraphs over the dual complex algebra. Explicit formulas for the dual Drazin inverse of dual complex anti-triangular block matrices are derived under suitable assumptions. These results are applied to DN-DS digraphs, DN-DLS digraphs, and DN-DW digraphs. In particular, the assumptions in \cite{AMPMJM2026} are weakened for DN-DS digraphs, an open problem in \cite{AMPMJM2026} for the case $BC=0$ is generalized and solved for DN-DLS digraphs. And the group inverse result in \cite{MNSEJAA2022} for bipartite block form adjacency matrices is extended to the dual Drazin inverse for DN-DW digraphs. We further derive explicit formulas for the dual group inverse and dual Drazin inverse of another adjacency matrix for DN-DW digraphs.
\end{abstract}

\begin{keyword}
Dual-Number-Weighted Digraphs\sep Adjacency Matrices\sep Dual Drazin Inverse\sep Anti-triangular Block Matrix 

\MSC[2010] 15A09; 15B33; 05C20; 05C50
\end{keyword}

\end{frontmatter}

\section{Introduction}
Dual numbers, introduced by Clifford in 1871 \cite{Clifford1871}, provide an algebraic framework for representing quantities consisting of a standard part and an infinitesimal part. The infinitesimal component can be interpreted as a perturbation of the standard part or as derivative information in time-dependent models. Dual numbers and their algebra have been widely used in kinematic analysis \cite{Kinematic}, screw motion theory \cite{Screw}, and rigid body motion analysis \cite{Rigidbody2,Rigidbody}, spatial mechanisms \cite{Spatial}. Consequently, increasing attention has been devoted to the fundamental theoretical properties of dual matrices.

Dual matrices, whose entries are dual numbers,
provide a convenient mathematical model for describing coupled positional and velocity information in mechanical systems. It also has application in machine vision, robotics, and traveling wave identification and brain dynamics \cite{Robotics,Rigidbody2,Kinematic,Brain}.

Generalized inverses of dual matrices arise naturally in the analysis and synthesis of mechanisms and in various kinematic computation problems. In 2018, Pennestri\`{\i} et al. \cite{Kinematic} investigated the Moore-Penrose dual generalized inverse and its application to Kinematic synthesis of spatial linkages. In 2021, Udwadia \cite{Udwadia2021} applied dual generalized inverse to solve systems of linear dual equations. However, unlike the classical matrix case, dual matrices do not always admit generalized inverses due to their special algebraic structure. This motivates the study of existence conditions, representations, and computational methods for dual generalized inverses. In 2020, Udwadia et al. \cite{Udwadia2020} investigated the question of whether all dual matrices have dual Moore-Penrose generalized inverses. In 2023, Zhong and Zhang \cite{ZZFil2023} proved the uniqueness of the dual Drazin
inverse whenever it exists and established necessary and sufficient conditions for a square dual matrix to admit a dual Drazin inverse, together with a compact computational formula.

A fundamental problem in matrix theory is to characterize the invertibility of matrices associated with graphs and to derive explicit formulas for their inverses or generalized inverses in terms of graph structure. Research has also focused on generalized inverses of graph-associated matrices. In particular, the group and Drazin inverses have been studied for several structured graph families such as path graphs, broom tree, bipartite digraphs, and multi-star graph in 
\cite{AMPMJM2026,Group path2008,Catral2009,broom tree,MNSEJAA2022}. These developments highlight the role of generalized inverses in revealing structural features of graphs and digraphs, connecting combinatorial characteristics with algebraic and spectral properties.

Motivated by the two research directions highlighted above, we investigate dual generalized inverses of structured dual block matrices arising from graph models. In particular, we derive new formulas for the dual Drazin inverse of dual complex anti-triangular block matrices and apply these results to adjacency matrices of special graph classes, which are devoted to an open problem that was first proposed by Campbell and Meyer \cite{Campbell1991} for complex matrices, and has since attracted considerable attention \cite{HWWLAA2001,ZMD2026,daochangjcam2025}. Explicit representations of the generalized inverses of adjacency matrices help clarify the relationship between their associated graph topology and matrix decomposition, which are useful in the analysis of spectral properties, structural perturbations, and algebraic invariants of directed graphs. Moreover, when edge weights are described by dual numbers, the corresponding dual generalized inverse of the adjacency matrix captures both structural information and infinitesimal perturbations, which is relevant to sensitivity analysis in weighted network models.

The remainder of this paper is organized as follows. In Section 2, we present preliminaries on dual numbers, the dual Drazin inverse, and the adjacency matrix of a connected digraph. Section 3 derives explicit expressions for the dual Drazin inverse of dual complex anti-triangular block matrices under certain assumptions. In Section 4, we first provide the notion of a connected dual-number-weighted digraph. Then we introduce DN-DS digraphs, DN-DLS digraphs, DN-DW digraphs, and investigate their adjacency matrices. Moreover, we apply the main results in Section 3 to adjacency matrices of the above graph classes. For DN-DS digraphs, we derive explicit representations of the corresponding dual Drazin inverses by removing half of the assumptions imposed in \cite{AMPMJM2026} and extending the results from $\mathbb{C}$ to the dual complex algebra $\mathbb{DC}_z$. Furthermore, for DN-DLS digraphs, we generalize and resolve an open problem in \cite{AMPMJM2026}, that is, to derive the representation for the representation of the Drazin inverse in the case $BC=0$. In addition, we derive some explicit representations of the dual group inverse and dual Drazin inverse for two kinds of adjacency matrices for DN-DW digraphs, which generalize the group inverse of the bipartite block form adjacency matrix of the Dutch windmill graph mentioned in \cite{MNSEJAA2022}.

\section{Preliminaries and key lemmas}
Let $\mathbb{D}$ denote the set of dual numbers. 
A dual number $\hat{p} \in \mathbb{D}$ is defined by
$\hat{p} = p + \varepsilon p_0$,
where $p,p_0\in \mathbb{R}$ and $\varepsilon$ is the hypercomplex unit basis satisfying
\[
\varepsilon \neq 0, \quad \varepsilon^2 = 0, \quad
0\varepsilon = \varepsilon 0 = 0, \quad
1\varepsilon = \varepsilon 1 = \varepsilon.
\]
The scalars $p$ and $p_0$ are called the \emph{standard part} and 
\emph{infinitesimal part} of $\hat{p}$, respectively. 
A dual number is said to be \emph{appreciable} if $p \neq 0$, 
and \emph{infinitesimal} otherwise.

When $p, p_0$ are replaced by complex numbers, we obtain the 
\emph{dual complex field}
$\mathbb{DC} = \{ z + \varepsilon z_0 \mid z, z_0 \in \mathbb{C} \}.$
For vectors $x, x_0 \in \mathbb{C}^n$, a \emph{dual complex vector} is defined as
$\widehat{x} = x + \varepsilon x_0$, and the set of all such vectors is denoted by $\mathbb{DC}^n$. Similarly, for matrices $A, A_0 \in \mathbb{C}^{m\times n}$, a 
\emph{dual complex matrix} is defined as
$\widehat{A} = A + \varepsilon A_0$, and the set of all $m \times n$ dual complex matrices is denoted by
$\mathbb{DC}^{m\times n}$. 
The matrices $A$ and $A_0$ are called the standard part and infinitesimal part of $A$, respectively.

For a matrix $A\in \mathbb{C}^{n\times n}$, the minimum nonnegative integer $k$ satisfying rank($A^k$)=rank($A^{k+1}$) is called the index of $A$, denoted by $\Ind(A)$. In \cite{Wang2024,ZZFil2023}, the dual Drazin inverse of $\widehat{A}$ was defined and it was shown that it exists if and only if
$(I_n - AA^D) M (I_n - AA^D) = O$, where $M=\sum_{i=1}^{k}A^{k-i}A_0A^{i-1}$ such that $\Ind(A)=k$ and $A^D$ is the usual Drazin inverse of the complex matrix $A$. Hence, we denote
\[
\mathbb{DC}_z^{n\times n}=\{\widehat{A}|\widehat{A}=A+\varepsilon A_0\in \mathbb{DC}^{n\times n}, (I_n-AA^D)M(I_n-AA^D)=O\},
\] that is to say, the set of all matrices $\widehat{A}$ for which the dual Drazin inverse exists.

Let $\widehat{A} = A + \varepsilon A_0 \in \mathbb{DC}_z^{n \times n}$ with $\Ind(A)=k$. The dual Drazin inverse of $\widehat{A}$ 
is the unique matrix $\widehat{X} \in \mathbb{DC}_z^{n \times n}$ 
satisfying
\begin{equation}\label{Def Dual Drazin inverse}
\widehat{A}^{k}\widehat{X}\widehat{A}
= \widehat{A}^k, \quad
\widehat{X}\widehat{A}\widehat{X}
= \widehat{X}, \quad
\widehat{A}\widehat{X}
= \widehat{X}\widehat{A}.
\end{equation}
It is denoted by $\widehat{A}^D$. When $k=1$, $\widehat{A}^D$ reduces to the dual group inverse of $\widehat{A}$, denoted by $\widehat{A}^{\#}$. The spectral idempotent of $\widehat{A}$ corresponding to $\{0\}$ is 
defined by $\widehat{A}^\pi = I - \widehat{A}^e$,
where $\widehat{A}^e = \widehat{A}\widehat{A}^D$. Here, $I$ and $O$ denote the identity and zero matrices in $\mathbb{DC}_z^{n\times n}$, respectively. Since $(\widehat{A}^D)^s = (\widehat{A}^s)^D$, we write $(\widehat{A}^D)^s = \widehat{A}^{Ds} = \widehat{A}^{sD}$ for any positive integer $s$.
Throughout the paper, a sum is defined to be zero whenever the lower limit exceeds the upper limit, and we adopt the convention $\widehat{A}^0 = I$.

The formulation in \eqref{Def Dual Drazin inverse} appears in the literature as such, see \cite{Wang2024}. Now, using the first and third 
equations in (1), we can rewrite the first equation as   $\widehat{A}^{k + 1} \widehat{X}  =  \widehat{A}^k$
so that the dual Drazin inverse of $\widehat{A}$ is formally the same as the usual Drazin inverse of $A$. Since $k=\operatorname{Ind}(A)$, it is natural to consider some appropriate definition of the index for $\widehat{A}$ and to investigate its relation to $\operatorname{Ind}(A)$, respectively. 

\begin{remark}
Let $\widehat{A}=A+\varepsilon A_0\in \mathbb{DC}^{n\times n}$. 
Define the standard rank by 
\[
\operatorname{rank}_s(\widehat{A})
:= \operatorname{rank}(A).
\]
Since 
\[
\widehat{A}^k
= A^k
+ \varepsilon \sum_{i=1}^{k} A^{k-i} A_0 A^{i-1},
\]
the standard part of $\widehat{A}^k$ is $A^k$, and thus
\[
\operatorname{rank}_s(\widehat{A}^k)
= \operatorname{rank}(A^k).
\]
Defining
\[
\operatorname{Ind}_s(\widehat{A})=k
\;\Longleftrightarrow\;
\operatorname{rank}_s(\widehat{A}^k)
=\operatorname{rank}_s(\widehat{A}^{k+1}),
\]
we obtain the appreciable index
\[
\operatorname{Ind}_s(\widehat{A})
=\operatorname{Ind}(A),
\]which is also mentioned in \cite[Definition 3.1]{Wang2024} over $\mathbb{DR}^{n\times n}$.

On the other hand, let $\widehat{A}=A+\varepsilon A_0 \in \mathbb{DC}^{n\times n}$ with 
$\operatorname{Ind_s}(\widehat{A})=k$. 
Then the smallest positive integer $t~(k\le t\le 2k)$ for which
\[
\operatorname{rank}_s\!\left(\widehat{A}^{\,t}\right)
=
\operatorname{rank}\!\left(\widehat{A}^{\,t}\right)
\]
holds is called the \emph{dual index} of $\widehat{A}$, and is denoted by 
$\operatorname{Ind}(\widehat{A})=t$, which is also mentioned in \cite[Definition 3.2]{Wang2024} over $\mathbb{DR}^{n\times n}$.
\end{remark}

Moreover, the isomorphism index of a dual matrix can be defined as follows.
\begin{remark}
Let $\widehat{A}=A+\varepsilon B\in \mathbb{DC}^{n\times n}$ with $\operatorname{Ind_s}(\widehat{A})=k$, 
where $A,B\in\mathbb{C}^{n\times n}$. 
Consider the block matrix
\[
\Phi(\widehat{A})
=
\begin{pmatrix}
A & B \\
0 & A
\end{pmatrix},
\]
and define the isomorphism index of $\widehat{A}$ as follows
\[
\operatorname{Ind_{\Phi}}(\widehat{A})
:= \operatorname{Ind}(\Phi(\widehat{A})).
\]
By \cite[Theorem 7.7.2]{Campbell1991}, we conclude that 
$
\operatorname{Ind}(A)\le \operatorname{Ind_{\Phi}}(\widehat{A})
\le 2\operatorname{Ind}(A)
$, i.e.,\[k\le \operatorname{Ind_{\Phi}}(\widehat{A})\le 2k.\]
\end{remark}

We now introduce the notation in \cite{ZZFil2023} that will be used in the sequel.

Assume that $\widehat{A}=A+\varepsilon A_0\in \mathbb{DC}_z^{n \times n}$, $\operatorname{Ind}(A)=k$. As already observed,
\[
\widehat{A}^{k}=A^{k}+\varepsilon M,~
\text{where}~
M=\sum_{i=1}^{k} A^{k-i}A_0A^{i-1}.
\] From \cite{ZZFil2023}, we obtain
\[\widehat{A}^{D}=A^{D}+\varepsilon A_R,
\]
where
$A_R=-A^{D}A_0A^{D}
+\sum\limits_{i=0}^{k-1}(A^{D})^{i+2}A_0A^{i}(I_n-AA^{D})
+\sum\limits_{i=0}^{k-1}(I_n-AA^{D})A^{i}A_0(A^{D})^{i+2}.
$
Meanwhile, by induction, we can  obtain $(\widehat{A}^D)^k=A^{kD}
+\varepsilon\sum\limits_{i=0}^{k-1}A^{kD}A_{R}A^{(k-i-1)D}$, that will be used in Section 4.

We next present Cline's formula for dual Drazin inverses, extending the classical result in \cite{cline1965}, which will be used in Section 3. 

\begin{lemma}\label{cline}
For $\widehat{A}\in\mathbb{DC}^{m\times n}$, $\widehat{B}\in\mathbb{DC}^{n\times m}$ and $\widehat{B}\widehat{A}\in\mathbb{DC}_z^{n\times n}$,
$(\widehat{A}\widehat{B})^D = \widehat{A}(\widehat{B}\widehat{A})^{2D}\widehat{B}.$
\end{lemma}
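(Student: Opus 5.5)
Set $\widehat{X}:=\widehat{A}(\widehat{B}\widehat{A})^{2D}\widehat{B}$ and write $\widehat{Y}:=(\widehat{B}\widehat{A})^D$, so that $\widehat{X}=\widehat{A}\widehat{Y}^2\widehat{B}$. The plan is to copy Cline's classical argument, using only the three defining relations \eqref{Def Dual Drazin inverse} for $\widehat{Y}$ and associativity of multiplication of dual matrices. Commutativity of $\mathbb{DC}$ is not needed. For $\widehat{Y}$ these relations are $\widehat{Y}(\widehat{B}\widehat{A})=(\widehat{B}\widehat{A})\widehat{Y}$, $\widehat{Y}(\widehat{B}\widehat{A})\widehat{Y}=\widehat{Y}$ and $(\widehat{B}\widehat{A})^{k+1}\widehat{Y}=(\widehat{B}\widehat{A})^{k}$, where $k=\Ind(BA)$. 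A point to handle first is that the definition of $\widehat{A}^D$ is tied to the index of the standard part, which for $\widehat{A}\widehat{B}$ is $\Ind(AB)$ rather than $\Ind(BA)$. I will therefore verify the relations with some power $m$ and then reduce to $\Ind(AB)$. Since $\Ind(AB)\le \Ind(BA)+1$ classically, the choice $m=k+1$ is enough.

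\textbf{Step 1 (commutation).} Using $\widehat{B}\widehat{A}\widehat{Y}=\widehat{Y}\widehat{B}\widehat{A}$, we have
\[
\widehat{A}\widehat{B}\widehat{X}=\widehat{A}(\widehat{B}\widehat{A})\widehat{Y}^2\widehat{B}=\widehat{A}\widehat{Y}^2(\widehat{B}\widehat{A})\widehat{B}=\widehat{X}\widehat{A}\widehat{B}.
\]

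\textbf{Step 2 (outer inverse).} We compute
\[
\widehat{X}\widehat{A}\widehat{B}\widehat{X}=\widehat{A}\widehat{Y}^2(\widehat{B}\widehat{A})(\widehat{B}\widehat{A})\widehat{Y}^2\widehat{B}=\widehat{A}\widehat{Y}\big(\widehat{Y}\widehat{B}\widehat{A}\widehat{Y}\big)\widehat{B}\widehat{A}\widehat{Y}\widehat{B}.
\]
The bracket equals $\widehat{Y}$, and then $\widehat{Y}^2\widehat{B}\widehat{A}\widehat{Y}=\widehat{Y}^2$. Hence the right-hand side is $\widehat{A}\widehat{Y}^2\widehat{B}=\widehat{X}$.

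\textbf{Step 3 (power condition).} We compute
\[
(\widehat{A}\widehat{B})^{k+2}\widehat{X}=\widehat{A}(\widehat{B}\widehat{A})^{k+2}\widehat{Y}^2\widehat{B}=\widehat{A}(\widehat{B}\widehat{A})^{k}\widehat{B}=(\widehat{A}\widehat{B})^{k+1}.
\]
This uses $(\widehat{B}\widehat{A})^{k+1}\widehat{Y}=(\widehat{B}\widehat{A})^k$ twice.

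\textbf{Step 4 (conclusion).} This is the step needing care. From Steps 1 and 2, $\widehat{E}:=\widehat{A}\widehat{B}\widehat{X}$ is idempotent and commutes with $\widehat{A}\widehat{B}$, and $(\widehat{A}\widehat{B})^{k+1}(I-\widehat{E})=O$. Taking standard parts gives the classical Cline identity $X=A(BA)^{2D}B=(AB)^D$. So $\widehat{X}$ has standard part $(AB)^D$, and $I-\widehat{E}$ has standard part $(AB)^\pi$. We need to upgrade $(\widehat{A}\widehat{B})^{m}(I-\widehat{E})=O$ from $m=k+1$ to $m=\Ind(AB)$. I will argue that $\widehat{X}$ is a dual Drazin-type inverse of $\widehat{A}\widehat{B}$ for a large power, and then invoke the uniqueness and existence characterization of \cite{ZZFil2023}, applied with $\widehat{A}\widehat{B}$ in place of $\widehat{A}$. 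That result yields $\widehat{A}\widehat{B}\in\mathbb{DC}_z$, and it shows that any matrix satisfying the three relations with some power $m\ge\Ind(AB)$ coincides with $(\widehat{A}\widehat{B})^D$. The reason is that the solution set of these relations does not depend on $m$ once $m\ge\Ind(AB)$: the formula $A^D+\varepsilon A_R$ from \cite{ZZFil2023} is derived for any such $m$.

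If that citation does not directly cover powers $m>\Ind$, the fallback is to verify $\varepsilon$-parts by hand. Write $(\widehat{A}\widehat{B})^{m}(I-\widehat{E})=\varepsilon Z_m$, and note that $Z_{m+1}=AB\,Z_m$. Since $Z_m$ is annihilated on both sides by the standard-part projections in the appropriate way, one can show that $Z_{\Ind(AB)}=O$. This index reduction is the expected main obstacle. Steps 1–3 are routine algebra.
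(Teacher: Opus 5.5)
\textbf{Steps 1--3 are fine; Step 4 is the gap, and it cannot be filled.} Your Steps 1--3 are correct and are exactly the paper's verification. The paper's proof begins by supposing $\Ind_s(\widehat A\widehat B)=\Ind_s(\widehat B\widehat A)+1$. Under that assumption your power $k+1$ already equals $\Ind(AB)$, the three defining relations hold at the required power, and uniqueness finishes.

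Step 4 is meant to handle the remaining cases $\Ind(AB)\le\Ind(BA)$. There the reduction from power $k+1$ to power $\Ind(AB)$ is not merely unproved but false, and so is the statement as written. Take
\[
\widehat A=\varepsilon\begin{pmatrix}0&0\\1&0\end{pmatrix},\qquad \widehat B=\begin{pmatrix}1&0\\0&0\end{pmatrix}.
\]
Then $\widehat B\widehat A=O\in\mathbb{DC}_z^{2\times 2}$, so $\widehat X=O$. On the other hand, $\widehat N:=\widehat A\widehat B=\varepsilon\begin{pmatrix}0&0\\1&0\end{pmatrix}$ has $N=O$, $\Ind(N)=1$ and $M=\begin{pmatrix}0&0\\1&0\end{pmatrix}$. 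Hence $(I-NN^D)M(I-NN^D)=M\neq O$ and $\widehat N\notin\mathbb{DC}_z$. Directly: no $\widehat Z$ can satisfy $\widehat N^{2}\widehat Z=\widehat N$, because $\widehat N^2=O\neq\widehat N$. In your notation $k=1$, $\widehat E=O$ and $(\widehat A\widehat B)^m(I-\widehat E)=\widehat N^m$. So Step 3 holds ($Z_2=O$), but $Z_{\Ind(AB)}=Z_1\neq O$.

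\textbf{Where the reasoning breaks.} The faulty claims are that the solution set of the three relations does not depend on $m\ge\Ind(AB)$, and that the Zhong--Zhang characterization therefore gives $\widehat A\widehat B\in\mathbb{DC}_z$.

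Only uniqueness is independent of $m$: transport everything by the injective unital homomorphism $\widehat N\mapsto\Phi(\widehat N)$ and use classical uniqueness. Existence does depend on $m$, because infinitesimal parts are nilpotent. Every dual matrix has a solution at power $\Ind(\Phi(\widehat N))$, which can exceed $\Ind(N)$. A solution at power $\Ind(N)$ exists exactly when $\widehat N\in\mathbb{DC}_z$.

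So Steps 1--3 only show that $\widehat X$ is a Drazin inverse of $\widehat A\widehat B$ in the ring-theoretic sense. It need not be the paper's dual Drazin inverse, whose definition fixes the power at $\Ind(AB)$. Likewise, the two-sided annihilation of $Z_m$ by the standard-part projections does not force $Z_{\Ind(AB)}=O$.

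\textbf{How to repair it.} What is missing is a hypothesis, not an argument. Either of the following works:
\begin{itemize}
\item Assume, as the paper's proof does, that $\Ind(AB)=\Ind(BA)+1$. Then Step 4 is vacuous and Steps 1--3 are the whole proof.
\item Assume in addition that $\widehat A\widehat B\in\mathbb{DC}_z$. Then $(\widehat A\widehat B)^D$ satisfies the relations at power $\Ind(AB)$, hence at power $k+1$ after multiplying by a suitable power of $\widehat A\widehat B$. Uniqueness at that fixed power gives $\widehat X=(\widehat A\widehat B)^D$.
\end{itemize}
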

\begin{proof}
    Suppose that $\operatorname{Ind}_s(\widehat{A}\widehat{B})=k=\Ind_s(\widehat{B}\widehat{A})+1$, then
    \begin{align*}
        &(\widehat A\widehat  B)^D\widehat A\widehat B(\widehat A\widehat B)^D=\widehat{A}(\widehat{B}\widehat{A})^{2D}\widehat{B}\widehat A\widehat  B\widehat{A}(\widehat{B}\widehat{A})^{2D}\widehat{B}=\widehat{A}(\widehat{B}\widehat{A})^{2D}\widehat{B}=(\widehat A\widehat  B)^D,\\
        &(\widehat A\widehat  B)(\widehat A\widehat  B)^D=\widehat A\widehat  B\widehat{A}(\widehat{B}\widehat{A})^{2D}\widehat{B}=\widehat A(\widehat{B}\widehat{A})^{2D}(\widehat{B}\widehat{A})\widehat{B}=(\widehat A\widehat  B)^D(\widehat A\widehat  B),\\
        &(\widehat A\widehat  B)^{k+1}(\widehat A\widehat  B)^D =\widehat A(\widehat B \widehat A)^k \widehat B\widehat{A}(\widehat{B}\widehat{A})^{2D}\widehat{B}
        =\widehat A(\widehat{B}\widehat{A})^k
        (\widehat{B}\widehat{A})^D\widehat  B
        =\widehat A(\widehat B \widehat A)^{k-1}\widehat  B
        =(\widehat A \widehat B)^k.
        \end{align*}
        Therefore, by \eqref{Def Dual Drazin inverse}, we finish the proof.
\end{proof}

\begin{lemma}\label{dualtri}
    Let $\widehat W=\begin{pmatrix}
        \widehat A&\widehat B\\O&\widehat D
    \end{pmatrix}$ and $\widehat V=\begin{pmatrix}
        \widehat D&O\\\widehat B&\widehat A
    \end{pmatrix}\in\mathbb {DC}_z^{n\times n}$, where $\widehat A$ and $\widehat D$ are dual Drazin invertible such that $\Ind_s(\widehat A)=p$ and $\Ind_s(\widehat D)=q$. Then
    \begin{align*}
        \widehat W^D=\begin{pmatrix}
            \widehat A^D&\widehat S\\O&\widehat D^D
        \end{pmatrix}\quad\text{and}\quad \widehat V^D=\begin{pmatrix}
            \widehat D^D&O\\\widehat S&\widehat A^D
        \end{pmatrix},
    \end{align*}where
    \begin{align*}
        \widehat S=\sum_{i=0}^{q-1}\widehat A^{(i+2)D}\widehat B\widehat D^i\widehat D^\pi+\widehat A^\pi\sum_{i=0}^{p-1}\widehat A^i\widehat B\widehat D^{(i+2)D}-\widehat A^D\widehat B\widehat D^D.
    \end{align*}
\end{lemma}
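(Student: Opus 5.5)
We verify directly that the proposed matrix $\widehat X=\begin{pmatrix}\widehat A^D&\widehat S\\O&\widehat D^D\end{pmatrix}$ satisfies the three defining equations in \eqref{Def Dual Drazin inverse}, in the equivalent form $\widehat X\widehat W\widehat X=\widehat X$, $\widehat W\widehat X=\widehat X\widehat W$ and $\widehat W^{m+1}\widehat X=\widehat W^m$ for a sufficiently large $m$ (e.g. $m=p+q$). Uniqueness of the dual Drazin inverse \cite{ZZFil2023} then gives $\widehat W^D=\widehat X$. Since the dual numbers form a commutative ring and the dual Drazin inverses of $\widehat A$ and $\widehat D$ satisfy the same algebraic identities as ordinary Drazin inverses, the classical computation from the complex case (Meyer--Rose, Hartwig--Shoaf) carries over verbatim. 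We only need to check that it uses nothing beyond associativity, the three Drazin identities for $\widehat A$ and $\widehat D$, and $\widehat A^\pi\widehat A^p=O$, $\widehat D^q\widehat D^\pi=O$. The latter follow from $\widehat A^{p+1}\widehat A^D=\widehat A^p$, with $p$ the standard index, which is how the definition is stated in the paper.

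The block equations reduce to identities for the $(1,2)$ entry. Commutativity becomes $\widehat A\widehat S+\widehat B\widehat D^D=\widehat A^D\widehat B+\widehat S\widehat D$. For the first sum in $\widehat S$, multiplication by $\widehat A$ on the left telescopes against multiplication by $\widehat D$ on the right, using $\widehat D^i\widehat D^\pi\widehat D=\widehat D^{i+1}\widehat D^\pi$ and $\widehat D^q\widehat D^\pi=O$; this leaves boundary terms of the form $\widehat A^D\widehat B\widehat D^\pi$. The second sum behaves symmetrically and leaves $\widehat A^\pi\widehat B\widehat D^D$. Adding the correction term $-\widehat A^D\widehat B\widehat D^D$, which contributes $-\widehat A^e\widehat B\widehat D^D+\widehat A^D\widehat B\widehat D^e$, the remainders combine through $\widehat A^\pi+\widehat A^e=I$ into the required identity. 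The equation $\widehat X\widehat W\widehat X=\widehat X$ reduces to $\widehat A^D\widehat A\widehat S+\widehat A^D\widehat B\widehat D^D+\widehat S\widehat D\widehat D^D=\widehat S$. This follows from the projections: $\widehat A^e$ annihilates the $\widehat A^\pi$-sum, $\widehat D^e$ annihilates the $\widehat D^\pi$-sum, and the mixed terms cancel.

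The power condition is the main obstacle. Write $\widehat W^m=\begin{pmatrix}\widehat A^m&\widehat T_m\\O&\widehat D^m\end{pmatrix}$ with $\widehat T_m=\sum_{j=0}^{m-1}\widehat A^{j}\widehat B\widehat D^{m-1-j}$. We need $\widehat A^{m+1}\widehat S+\widehat T_{m+1}\widehat D^D=\widehat T_m$ for $m\ge p+q$. I will split $\widehat T_m$ as $(\widehat A^e+\widehat A^\pi)\widehat T_m(\widehat D^e+\widehat D^\pi)$ and treat the four pieces separately. The piece $\widehat A^\pi\cdot\widehat D^\pi$ vanishes because every term has $j\ge p$ or $m-1-j\ge q$. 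The two mixed pieces are matched against the truncated sums in $\widehat S$. The piece $\widehat A^e\cdot\widehat D^e$ is matched against $-\widehat A^D\widehat B\widehat D^D$. This index bookkeeping is the delicate part. If it becomes messy, an alternative is to prove only commutativity and $\widehat X\widehat W\widehat X=\widehat X$, and then show that $\widehat W(I-\widehat W\widehat X)$ is nilpotent; this is equivalent to the power condition because $\widehat W\widehat X$ is an idempotent commuting with $\widehat W$. Nilpotency follows since $I-\widehat W\widehat X$ is block upper triangular with diagonal blocks $\widehat A^\pi$ and $\widehat D^\pi$, so $\widehat W(I-\widehat W\widehat X)$ has nilpotent diagonal blocks $\widehat A\widehat A^\pi$ and $\widehat D\widehat D^\pi$, and a block triangular matrix with nilpotent diagonal blocks is nilpotent.

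The formula for $\widehat V^D$ follows by permutation similarity. With $\widehat P=\begin{pmatrix}O&I\\I&O\end{pmatrix}$ we have $\widehat V=\widehat P\widehat W^{T'}\widehat P$ only after reordering blocks, so the relation is not literally transpose-based. Instead, I will note directly that $\widehat V=\widehat P\begin{pmatrix}\widehat A&\widehat B\\O&\widehat D\end{pmatrix}\widehat P$ fails because the blocks are swapped, and simply repeat the verification for lower triangular form; it is identical with the roles of left and right multiplication in the $(2,1)$ entry handled the same way, giving $(2,1)$ entry $\widehat S$.
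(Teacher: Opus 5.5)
\paragraph{Verdict.} Your proof is correct, and its overall plan is the paper's: exhibit $\widehat X$ and check the three defining relations blockwise. Your $(1,2)$-entry identities $\widehat A\widehat S+\widehat B\widehat D^D=\widehat A^D\widehat B+\widehat S\widehat D$ and $\widehat A^e\widehat S+\widehat A^D\widehat B\widehat D^D+\widehat S\widehat D^e=\widehat S$, with the telescoping you describe, are what the paper's steps (i)--(ii) amount to.

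\paragraph{The power condition.} This is where your route genuinely differs. The paper does explicit bookkeeping. It splits $\widehat Z_{k+1}\widehat D^D$ at $i=q$ and uses $\widehat A^{k+1}\widehat A^{(i+2)D}=\widehat A^{k-1-i}$ for $i\le q-1$, which needs $k\ge p+q$. It gets this by asserting $\Ind_s(\widehat W)\ge p+q$, which is false in general: for $\widehat B=O$ the standard index is $\max\{p,q\}$.

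Your nilpotency argument avoids all of this. Since $\widehat W\widehat X$ is an idempotent commuting with $\widehat W$, you get $(\widehat W-\widehat W^2\widehat X)^r=\widehat W^r-\widehat W^{r+1}\widehat X$. Moreover $\widehat W(I-\widehat W\widehat X)$ is block upper triangular with nilpotent diagonal blocks $\widehat A\widehat A^\pi$ and $\widehat D\widehat D^\pi$, so its $(p+q)$-th power is $O$. I would make this the main argument, not the fallback.

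The four-piece split also works, but your description of it is slightly off. Since $\widehat A^{m+1}\widehat A^\pi=O$, the whole $\widehat A^\pi$-sum of $\widehat S$ is killed. So the $\widehat A^\pi(\cdot)\widehat D^e$ piece comes from $\widehat T_{m+1}\widehat D^D$ alone, not from $\widehat S$.

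\paragraph{Uniqueness.} Either way you only obtain the power relation with exponent $p+q$, not with $\Ind_s(\widehat W)$. So the uniqueness you cite must be the exponent-free version. Drazin's argument gives this: for two solutions with exponents $m,n$, taking $N=\max\{m,n\}$ yields $\widehat X=\widehat X\widehat W\widehat Y=\widehat Y$. Combine it with the hypothesis $\widehat W\in\mathbb{DC}_z^{n\times n}$, which guarantees that the paper-sense $\widehat W^D$ exists. This is also exactly the repair the paper's step (iii) needs.

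\paragraph{The matrix $\widehat V$.} Your remark here is mistaken. Take the block-swap permutation $\widehat P=\begin{pmatrix}O&I\\I&O\end{pmatrix}$, with identity blocks sized to fit. Then one has exactly $\widehat P\widehat W\widehat P^{T}=\begin{pmatrix}\widehat D&O\\\widehat B&\widehat A\end{pmatrix}=\widehat V$.

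Conjugation by a permutation matrix preserves the three defining relations and the standard index. Hence $\widehat V^D=\widehat P\widehat W^D\widehat P^{T}=\begin{pmatrix}\widehat D^D&O\\\widehat S&\widehat A^D\end{pmatrix}$ immediately. Repeating the whole verification for the lower triangular form is valid but unnecessary, and this one line also replaces the paper's ``similarly''. The paper itself uses the same swap later, for the hub-and-blade Dutch windmill matrix.
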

\begin{proof}
    We first verify that $\widehat W^D$ satisfies the three defining equations \eqref{Def Dual Drazin inverse} of the dual Drazin inverse.
\medskip
\noindent
(i) A routine block computation shows that
    \begin{align*}
        \widehat W^D\widehat W\widehat W^D
        &=\begin{pmatrix}
            \widehat A^D&\sum\limits_{i=0}^{q-1}\widehat A^{(i+2)D}\widehat B\widehat D^i\widehat D^\pi+\widehat A^\pi\sum\limits_{i=0}^{p-1}\widehat A^i\widehat B\widehat D^{(i+2)D}-\widehat A^D\widehat B\widehat D^D\\O&\widehat D^D
        \end{pmatrix}=\widehat W^D,
    \end{align*}
\medskip
\noindent
(ii) By direct multiplication we obtain
    \begin{align*}
        \widehat W \widehat W^D
        &=\begin{pmatrix}
            \widehat A\widehat A^D&\sum\limits_{i=0}^{q-1}\widehat A^{(i+1)D}\widehat B\widehat D^i\widehat D^\pi+\widehat A^\pi\sum\limits_{i=0}^{p-1}\widehat A^{i}\widehat B\widehat D^{(i+1)D}\\O&\widehat D\widehat D^D
        \end{pmatrix},
    \end{align*}
    and
    \begin{align*}
        \widehat W^D \widehat W
        &=\begin{pmatrix}
            \widehat A\widehat A^D&\widehat A^D\widehat B+\sum\limits_{i=0}^{q-1}\widehat A^{(i+2)D}\widehat B\widehat D^{i+1}\widehat D^\pi+\widehat A^\pi\sum\limits_{i=0}^{p-1}\widehat A^{i}\widehat B\widehat D^{(i+1)D}-\widehat A^D\widehat B\widehat D^D\widehat D\\O&\widehat D\widehat D^D
        \end{pmatrix}.
    \end{align*}
    After adjusting appropriately the upper and lower limits of the summations, we conclude that $ \widehat W^D \widehat W= \widehat W \widehat W^D$.
    
\medskip
\noindent
(iii) It is well known that for any positive integer $n$
\[
\widehat W^n=
\begin{pmatrix}
\widehat A^n & \widehat Z_n\\
O & \widehat D^n
\end{pmatrix},
\qquad 
\widehat Z_n=\sum_{i=0}^{n-1}\widehat A^{n-1-i}\widehat B\widehat D^i.
\]
Let $k=\Ind_s(\widehat W)\ge p+q$. Then
\begin{equation}\label{index-rel}
\widehat A^{k+1}\widehat A^D=\widehat A^k, 
\qquad 
\widehat D^{k+1}\widehat D^D=\widehat D^k .
\end{equation}
Furthermore, we compute
\[
\widehat W^{k+1}\widehat W^D
=
\begin{pmatrix}
\widehat A^{k+1} & \widehat Z_{k+1}\\
O & \widehat D^{k+1}
\end{pmatrix}
\begin{pmatrix}
\widehat A^D & \widehat S\\
O & \widehat D^D
\end{pmatrix}=
\begin{pmatrix}
\widehat A^{k+1}\widehat A^D &
\widehat A^{k+1}\widehat S+\widehat Z_{k+1}\widehat D^D\\
O &
\widehat D^{k+1}\widehat D^D
\end{pmatrix}.
\]
Utilizing \eqref{index-rel}, we obtain the $(1,1)$-entry and $(4,4)$-entry of $W^{k+1}W^D$ are $A^k$ and $D^k$, respectively.
Hence it remains to show that
\[
\widehat A^{k+1}\widehat S+\widehat Z_{k+1}\widehat D^D=\widehat Z_k.
\]
Since
$\widehat Z_{k+1}=\sum\limits_{i=0}^{k}\widehat A^{k-i}\widehat B\widehat D^i,$
it follows that
$\widehat Z_{k+1}\widehat D^D=\sum\limits_{i=0}^{k}\widehat A^{k-i}\widehat B\widehat D^i\widehat D^D.$
We split the summation into two parts:
\[
\widehat Z_{k+1}\widehat D^D
=\sum_{i=0}^{q}\widehat A^{k-i}\widehat B\widehat D^i\widehat D^D
+\sum_{i=q+1}^{k}\widehat A^{k-i}\widehat B\widehat D^i\widehat D^D.
\]
For $i\ge q+1$, since $\operatorname{Ind}_s(\widehat D)=q$, we have
$D^iD^D=D^{i-1}.$
Thus
\[
\sum_{i=q+1}^{k}\widehat A^{k-i}\widehat B\widehat D^{i-1}=
\sum_{i=q}^{k-1}\widehat A^{k-1-i}\widehat B\widehat D^i.
\]
Consequently,
\begin{align}\label{ZD}
\widehat Z_{k+1}\widehat D^D
=\sum_{i=0}^{q}\widehat A^{k-i}\widehat B\widehat D^i\widehat D^D
+\sum_{i=q}^{k-1}\widehat A^{k-1-i}\widehat B\widehat D^i.
\end{align}
From $\Ind_s(\widehat A)=p$, it follows that
$
\widehat A^{p+1}\widehat A^D=\widehat A^p
~\text{and}~
\widehat A^p\widehat A^\pi=O .
$
Consequently, for any integer $k\ge p$ we have
\begin{equation*}
\widehat A^{k+1}\widehat A^D=\widehat A^k,
\qquad
\widehat A^{k+1}\widehat A^\pi=O .
\end{equation*}
It follows that
\begin{equation*}
\widehat A^{k+1}(\widehat A^D)^t=\widehat A^{k+1-t},
\qquad
1\le t\le k+1-p .
\end{equation*}
Using the explicit form of $\widehat S$ and the above identities, we simplify the expression $\widehat A^{k+1}\widehat S$ as follows:

\begin{align}\label{AS}
\widehat A^{k+1}\widehat S&=\widehat A^{k+1}\sum_{i=0}^{q-1}\widehat A^{(i+2)D}\widehat B\widehat D^i\widehat D^\pi
+\widehat A^{k+1}\widehat A^\pi\sum_{i=0}^{p-1}\widehat A^i\widehat B\widehat D^{(i+2)D}
-\widehat A^{k+1}\widehat A^D\widehat B\widehat D^D \notag\\
&=\sum_{i=0}^{q-1}\widehat A^{k+1}(\widehat A^D)^{i+2}\widehat B\widehat D^i\widehat D^\pi
+\underbrace{\widehat A^{k+1}\widehat A^\pi}_{=\,O}\sum_{i=0}^{p-1}\widehat A^i\widehat B\widehat D^{(i+2)D}
-\underbrace{\widehat A^{k+1}\widehat A^D}_{=\,\widehat A^k}\widehat B\widehat D^D \notag\\
&=\sum_{i=0}^{q-1}\widehat A^{k-1-i}\widehat B\widehat D^i\widehat D^\pi
-\widehat A^k\widehat B\widehat D^D\notag\\
&=\sum_{i=0}^{q-1}\widehat A^{k-1-i}\widehat B\widehat D^i
-\sum_{i=0}^{q}\widehat A^{k-i}\widehat B\widehat D^i\widehat D^D.
\end{align}
Adding the two expressions \eqref{AS} and \eqref{ZD} gives
\[
\widehat A^{k+1}\widehat S+\widehat Z_{k+1}\widehat D^D
=\sum_{i=0}^{k-1}\widehat A^{k-1-i}\widehat B\widehat D^i
=\widehat Z_k.
\]
Therefore $\widehat W^D$ satisfies
\eqref{Def Dual Drazin inverse}.  Similarly, $\widehat V^D$ also satisfies \eqref{Def Dual Drazin inverse}, and the proof is complete.
\end{proof}

The Drazin inverse of the sum of two matrices satisfying $PQ=O$ over 
$\mathbb{C}$ was characterized in \cite{HWWLAA2001}. We next extend 
this result to $\mathbb{DC}_z$, which will also be used in Section 3. The proof is analogous to the complex case, and hence we only sketch the main steps.

\begin{lemma}\label{PQ=0}
Let $\widehat{P}, \widehat{Q}\in\mathbb{DC}_z^{n\times n}$ be dual complex matrices such that $\Ind_s(\widehat{P})=r$, $\Ind_s(\widehat{Q})=t$. If $\widehat{P}\widehat{Q}=O$, then
\begin{align*}
    (\widehat{P}+\widehat{Q})^{D} 
&=\widehat{Q}^{\pi}\sum_{i=0}^{t-1}\widehat{Q}^i(\widehat{P}^D)^{i+1} 
+ \sum_{i=0}^{r-1} (\widehat{Q}^{D})^{i+1}\widehat{P}^i\widehat{P}^{\pi}. 
\end{align*}
\end{lemma}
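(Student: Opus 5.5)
The plan is to reduce this to Lemma~\ref{dualtri} via a block factorization, as in the complex case of \cite{HWWLAA2001}.

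\textbf{Step 1 (factorization).} Write
\[
\widehat P+\widehat Q=\begin{pmatrix} I & I\end{pmatrix}\begin{pmatrix}\widehat P\\ \widehat Q\end{pmatrix}=\widehat F\widehat G .
\]
Then
\[
\widehat G\widehat F=\begin{pmatrix}\widehat P&\widehat P\\ \widehat Q&\widehat Q\end{pmatrix}.
\]
This is not yet triangular. So I would instead use $\widehat P+\widehat Q=\begin{pmatrix}\widehat P & I\end{pmatrix}\begin{pmatrix} I\\ \widehat Q\end{pmatrix}$, whose reversed product is
\[
\begin{pmatrix} I\\ \widehat Q\end{pmatrix}\begin{pmatrix}\widehat P & I\end{pmatrix}=\begin{pmatrix}\widehat P & I\\ \widehat Q\widehat P & \widehat Q\end{pmatrix}.
\]
This still is not triangular.

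The better route is to use $\widehat P\widehat Q=O$ directly and take $\widehat F=\begin{pmatrix} I & \widehat P\end{pmatrix}$, $\widehat G=\begin{pmatrix}\widehat Q\\ I\end{pmatrix}$. Then $\widehat F\widehat G=\widehat Q+\widehat P$ and
\[
\widehat G\widehat F=\begin{pmatrix}\widehat Q & \widehat Q\widehat P\\ I & \widehat P\end{pmatrix}.
\]
This is also not triangular.

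\textbf{Step 1$'$ (direct verification).} Since these factorizations do not produce triangular forms, I would drop them and verify directly that the candidate
\[
\widehat X=\widehat Q^\pi\sum_{i=0}^{t-1}\widehat Q^i\widehat P^{(i+1)D}+\sum_{i=0}^{r-1}\widehat Q^{(i+1)D}\widehat P^i\widehat P^\pi
\]
satisfies the three equations \eqref{Def Dual Drazin inverse}. This is the same approach used for Lemma~\ref{dualtri}.

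\textbf{Key identities.} The verification rests on the following facts, all consequences of $\widehat P\widehat Q=O$ and of $\widehat P^D,\widehat Q^D$ being polynomial-like limits.
\begin{itemize}
\item $\widehat P^D\widehat Q=\widehat P^{2D}\widehat P\widehat Q=O$ and $\widehat P\widehat Q^D=\widehat P\widehat Q\widehat Q^{2D}=O$.
\item Consequently $\widehat P^\pi\widehat Q=\widehat Q$ and $\widehat P\widehat Q^\pi=\widehat P$.
\item $(\widehat P+\widehat Q)^n=\sum_{j=0}^{n}\widehat Q^j\widehat P^{n-j}$.
\end{itemize}
These must be rechecked over $\mathbb{DC}_z$ using only the defining equations. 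For example, $\widehat P^D=\widehat P^D\widehat P\widehat P^D=(\widehat P^D)^2\widehat P$ gives $\widehat P^D\widehat Q=(\widehat P^D)^2\widehat P\widehat Q=O$. The symmetric argument handles $\widehat Q^D$.

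\textbf{Step 2 (commutation).} Compute $(\widehat P+\widehat Q)\widehat X$ and $\widehat X(\widehat P+\widehat Q)$ using the identities above. Telescoping sums using $\widehat Q\widehat Q^\pi$ and $\widehat P^\pi\widehat P$ should show that both equal
\[
\widehat Q^\pi\widehat P\widehat P^D+\widehat Q\widehat Q^D\widehat P^\pi+\widehat Q^\pi\sum_{i}\dots
\]
I expect both to reduce to a common expression, with the truncated tails vanishing by $\widehat Q^t\widehat Q^\pi=O$ and $\widehat P^\pi\widehat P^r=O$.

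\textbf{Step 3 (the equations $\widehat X(\widehat P+\widehat Q)\widehat X=\widehat X$ and $(\widehat P+\widehat Q)^{k+1}\widehat X=(\widehat P+\widehat Q)^k$).} Take $k\ge r+t$ and expand the power binomially. Only terms with $\widehat Q^j\widehat P^{k+1-j}$ survive. Split according to whether $j\ge t$ or $k+1-j\ge r$, which always holds for one of the two since $k\ge r+t$. Then use $\widehat P^{m+1}\widehat P^D=\widehat P^m$ for $m\ge r$, and $\widehat Q^{m}\widehat Q^\pi=O$ for $m\ge t$.

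\textbf{Main obstacle.} I expect the main obstacle to be this final index bookkeeping. It is the same telescoping done in Lemma~\ref{dualtri}(iii), and I would model the splitting of the sums exactly on \eqref{ZD} and \eqref{AS}.
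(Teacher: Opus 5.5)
Your route differs from the paper's, although the factorization you were looking for does exist: in your third attempt $\widehat P$ and $\widehat Q$ are swapped. The paper writes $\widehat P+\widehat Q=\begin{pmatrix} I & \widehat Q\end{pmatrix}\begin{pmatrix}\widehat P\\ I\end{pmatrix}$. The reversed product is $\begin{pmatrix}\widehat P & \widehat P\widehat Q\\ I & \widehat Q\end{pmatrix}=\begin{pmatrix}\widehat P & O\\ I & \widehat Q\end{pmatrix}$, which is the $\widehat V$-form of Lemma~\ref{dualtri} with $\widehat D=\widehat P$, $\widehat B=I$, $\widehat A=\widehat Q$. Lemma~\ref{cline} then gives $(\widehat P+\widehat Q)^D=\begin{pmatrix} I & \widehat Q\end{pmatrix}(\cdot)^{2D}\begin{pmatrix}\widehat P\\ I\end{pmatrix}$, so all index bookkeeping is delegated to Lemma~\ref{dualtri}.

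Your direct verification stays at size $n$ and is self-contained. It does go through using exactly the identities you list, together with $\widehat Q^t\widehat Q^\pi=O$ and $\widehat P^r\widehat P^\pi=O$. One gets
\[
(\widehat P+\widehat Q)\widehat X=\widehat X(\widehat P+\widehat Q)=\widehat P\widehat P^D+\widehat Q\widehat Q^D\widehat P^\pi+\widehat Q^\pi\sum_{i=1}^{t-1}\widehat Q^i\widehat P^{iD}+\sum_{i=1}^{r-1}\widehat Q^{iD}\widehat P^i\widehat P^\pi ,
\]
hence $I-(\widehat P+\widehat Q)\widehat X=\widehat Q^\pi\widehat P^\pi-\widehat Q^\pi\sum_{i=1}^{t-1}\widehat Q^i\widehat P^{iD}-\sum_{i=1}^{r-1}\widehat Q^{iD}\widehat P^i\widehat P^\pi$. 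From this, both $\bigl(I-(\widehat P+\widehat Q)\widehat X\bigr)\widehat X=O$ and $(\widehat P+\widehat Q)^k\bigl(I-(\widehat P+\widehat Q)\widehat X\bigr)=O$ for $k\ge r+t$ follow by short cancellations. These computations still have to be written out. Your guessed common value $\widehat Q^\pi\widehat P\widehat P^D+\widehat Q\widehat Q^D\widehat P^\pi+\cdots$ subtracts $\widehat Q\widehat Q^D\widehat P\widehat P^D$ once too often, and that term need not vanish. Also, Step 3 contains no argument for $\widehat X(\widehat P+\widehat Q)\widehat X=\widehat X$.

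The genuine gap is the exponent in Step 3. Definition \eqref{Def Dual Drazin inverse} requires $\widehat S^{k+1}\widehat X=\widehat S^k$ for $\widehat S=\widehat P+\widehat Q$ with $k=\Ind_s(\widehat S)=\Ind(P+Q)$, and this can be much smaller than $r+t$. Over $\mathbb{C}$ the difference is harmless, but over $\mathbb{DC}$ it is exactly where existence can fail. Take $E_{12}=\begin{pmatrix}0&1\\0&0\end{pmatrix}$, $\widehat P=(1+\varepsilon)E_{12}$, $\widehat Q=-E_{12}$.
\begin{itemize}
\item Both lie in $\mathbb{DC}_z$, since $r=t=2$ and $\widehat P^2=\widehat Q^2=O$. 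Also $\widehat P\widehat Q=O$ and $\widehat P^D=\widehat Q^D=O$.
\item So your $\widehat X$ is $O$, and it satisfies all three equations for every $k\ge 2$.
\item But $\widehat S=\varepsilon E_{12}$ has $\Ind_s(\widehat S)=1$, and $\widehat S\widehat Y\widehat S=O\neq\widehat S$ for every $\widehat Y$. Hence $\widehat S$ has no dual Drazin inverse at all.
\end{itemize}

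The statement therefore needs the extra hypothesis $\widehat P+\widehat Q\in\mathbb{DC}_z$, and your proof must end with a uniqueness step. The true $\widehat S^D$ also satisfies $\widehat S^{m+1}\widehat S^D=\widehat S^m$ for all $m\ge k$. In any ring, two elements satisfying $XAX=X$, $AX=XA$, $A^{m+1}X=A^m$ for the same $m$ coincide, so $\widehat X=\widehat S^D$.

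The paper's proof carries the same tacit assumption. Lemma~\ref{cline} needs $\begin{pmatrix}\widehat P&O\\ I&\widehat Q\end{pmatrix}\in\mathbb{DC}_z$, which fails in this example: its standard part squares to $O$, while its square has $(2,1)$-block $\varepsilon E_{12}$. Likewise, Lemma~\ref{dualtri}(iii) takes $k=\Ind_s(\widehat W)\ge p+q$, which is the very step you copy. So this is a defect of the statement as much as of your plan, but you should state the hypothesis and the uniqueness step explicitly rather than inherit the gap.
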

\begin{proof}
    By Lemma \ref{cline}, we obtain
    \begin{align}\label{clineproof}
        (\widehat {P} + \widehat{Q})^D =
\left(\begin{bmatrix}
     I&\widehat{Q}
\end{bmatrix}
\begin{bmatrix}
\widehat{P} \\
\widehat{I}
\end{bmatrix}
\right)^{D} =
\begin{bmatrix}
    I&\widehat{Q}
\end{bmatrix}
\left(
\begin{bmatrix}
\widehat{P} &{O}\\
{I} & \widehat{Q}
\end{bmatrix}^{D}
\right)^2
\begin{bmatrix}
\widehat{P} \\
I
\end{bmatrix}.
\end{align}
    Moreover, utilizing Lemma \ref{dualtri}, we derive
    \begin{align}\label{clinepr}
       \begin{bmatrix}
\widehat{P} &{O}\\
{I} & \widehat{Q}
\end{bmatrix}^{D}=\begin{bmatrix}
\widehat {P}^D &{O}\\
\sum\limits_{i=0}^{r-1}\widehat Q^{(i+2)D}\widehat P^i\widehat P^\pi+\sum\limits_{i=0}^{t-1}\widehat Q^\pi \widehat Q^i\widehat P^{(i+2)D}-\widehat Q^D\widehat P^D & \widehat{Q}^D
\end{bmatrix}.
    \end{align}
    Substituting \eqref{clinepr} into \eqref{clineproof} yields the desired representation of $(\widehat{P}+\widehat{Q})^{D}$, and the proof is complete.
\end{proof}

The preceding basic content on dual numbers and the dual Drazin inverse provides the algebraic framework for studying graph-associated matrices 
over $\mathbb{DC}_z$. We next recall basic graph-theoretic definitions needed for our analysis.

Let $G$ be a simple undirected graph with vertex set 
$V=\{v_1,\ldots,v_n\}$. The adjacency matrix of $G$ is the 
$n\times n$ matrix $A=(a_{ij})$ defined by
\[
a_{ij}=
\begin{cases}
1, & \text{if } v_iv_j\in E(G),\\
0, & \text{otherwise}.
\end{cases}
\]
The graph $G$ is called nonsingular (singular) if $A$ is 
nonsingular (singular). The adjacency matrix of a simple 
undirected graph is symmetric.

Let $A=(a_{ij})\in\mathbb{C}^{n\times n}$. The digraph 
associated with $A$, denoted by $D(A)=(V,E)$, is defined by
\[
(v_i,v_j)\in E \quad \text{if and only if } a_{ij}\neq 0.
\]
The digraph $D(A)$ is called a tree digraph if it is strongly 
connected and all its cycles have length $2$. In this case, 
$A$ is called combinatorially symmetric.

\section{Main results}

In this section, we establish explicit representations for the dual Drazin inverse of dual complex anti-triangular block matrices under appropriate assumptions, thereby extending and generalizing several existing results in the literature.
\begin{theorem}\label{DecompABIO1}
Let $\widehat{N}=\begin{pmatrix}
    \widehat{A}&\widehat B\\I&O
\end{pmatrix}\in\mathbb{DC}_z^{n\times n}$ be an anti-triangular block matrix, where $\widehat A$ and $\widehat B$ are of the same order and dual Drazin invertible. Assume that $\Ind_s(\widehat A)=i_{\widehat A}$, $\Ind_s(\widehat B)=i_{\widehat B}$ and the conditions $\widehat A \widehat A^\pi \widehat B=\widehat B\widehat A\widehat A^\pi$ and $\widehat A\widehat A^e\widehat B=O$ hold. Then
\begin{align*}
 \widehat{N}^D=\begin{pmatrix}
     \sum\limits_{i=0}^{i_{\widehat B}-1}\widehat B^\pi \widehat B^i\widehat A^{(2i+1)D}&\widehat B^e\\
     \sum\limits_{i=0}^{i_{\widehat B}-1}\widehat B^\pi \widehat B^i\widehat A^{(2i+2)D}+\widehat B^D\widehat A^\pi&-\widehat A\widehat A^\pi \widehat B^D
 \end{pmatrix}.
\end{align*}
\end{theorem}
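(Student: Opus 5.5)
Split $\widehat N$ as $\widehat N=\widehat P+\widehat Q$ so that Lemma \ref{PQ=0} applies. Write $\widehat A=\widehat A\widehat A^e+\widehat A\widehat A^\pi$, the standard core–nilpotent splitting. Since $\widehat A\widehat A^e\widehat B=O$, the natural choice is
\[
\widehat P=\begin{pmatrix}\widehat A\widehat A^e&O\\O&O\end{pmatrix},\qquad
\widehat Q=\begin{pmatrix}\widehat A\widehat A^\pi&\widehat B\\ I&O\end{pmatrix}.
\]
Then $\widehat P\widehat Q=\begin{pmatrix}\widehat A\widehat A^e\widehat A\widehat A^\pi&\widehat A\widehat A^e\widehat B\\O&O\end{pmatrix}=O$, because $\widehat A^e\widehat A^\pi=O$. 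The matrix $\widehat P$ is group invertible, with $\widehat P^D=\mathrm{diag}(\widehat A^D,O)$ and $\widehat P^\pi=\mathrm{diag}(\widehat A^\pi,I)$; we may take $r=1$. Lemma \ref{PQ=0} then reduces the problem to computing $\widehat Q^D$ and $\widehat Q^\pi$. Only the terms with $i\le 1$ in the second sum survive, and the first sum involves $\widehat Q^i\widehat P^{(i+1)D}$.

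\textbf{Computing $\widehat Q^D$.} Put $\widehat U=\widehat A\widehat A^\pi$, which is nilpotent with $\widehat U^{i_{\widehat A}}=O$. The hypothesis $\widehat U\widehat B=\widehat B\widehat U$ makes $\widehat Q$ the companion-type matrix of a commuting pair. Split it once more as
\[
\widehat Q=\begin{pmatrix}\widehat U&O\\O&O\end{pmatrix}+\begin{pmatrix}O&\widehat B\\I&O\end{pmatrix}.
\]
The products of these two pieces do not vanish, so instead I would use $\widehat Q^2=\begin{pmatrix}\widehat U^2+\widehat B&\widehat U\widehat B\\ \widehat U&\widehat B\end{pmatrix}$ together with commutativity. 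The expected answer is
\[
\widehat Q^D=\begin{pmatrix}O&\widehat B^e\\ \widehat B^D&-\widehat U\widehat B^D\end{pmatrix},
\]
which matches the $(1,2)$, $(2,1)$ and $(2,2)$ blocks in the statement. I would check the three defining equations \eqref{Def Dual Drazin inverse} directly. Commutativity gives $\widehat U\widehat B^D=\widehat B^D\widehat U$, since $\widehat B^D$ is a polynomial-limit of $\widehat B$; over the dual setting I would instead verify it from uniqueness, as $\widehat U$ commuting with $\widehat B$ implies $\widehat U$ commutes with $\widehat B^D$. One computes $\widehat Q\widehat Q^D=\begin{pmatrix}\widehat B\widehat B^D&\widehat U\widehat B^e-\widehat U\widehat B\widehat B^D\\ O&\widehat B^e\end{pmatrix}=\mathrm{diag}(\widehat B^e,\widehat B^e)$, and similarly for $\widehat Q^D\widehat Q$. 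For the power condition, $\widehat Q^\pi=\mathrm{diag}(\widehat B^\pi,\widehat B^\pi)$, and $\widehat Q^m\widehat Q^\pi$ must vanish for large $m$. This holds because $\widehat Q\widehat Q^\pi$ is the companion matrix of the commuting nilpotent pair $(\widehat U\widehat B^\pi,\widehat B\widehat B^\pi)$, and such a matrix is nilpotent. This nilpotency, with an explicit index bound such as $i_{\widehat A}+2i_{\widehat B}$, is the step I expect to be the main obstacle. I would handle it by writing powers of $\widehat Q\widehat Q^\pi$ as polynomials in $\widehat U,\widehat B$ whose monomials $\widehat U^a\widehat B^b$ satisfy $a+2b\ge m-1$.

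\textbf{Assembling via Lemma \ref{PQ=0}.} With $\widehat Q^\pi=\mathrm{diag}(\widehat B^\pi,\widehat B^\pi)$, the first sum becomes $\sum_i\widehat Q^\pi\widehat Q^i\,\mathrm{diag}(\widehat A^{(i+1)D},O)$. Only the first block column of $\widehat Q^i$ matters. Because $\widehat U\widehat A^D=O$, that column, acting on $\widehat A^D$, reduces to $(\widehat B^{i/2},0)^T$ for even $i$ and $(0,\widehat B^{(i-1)/2})^T$ for odd $i$. Reindexing produces $\sum\widehat B^\pi\widehat B^i\widehat A^{(2i+1)D}$ in the $(1,1)$ block and $\sum\widehat B^\pi\widehat B^i\widehat A^{(2i+2)D}$ in the $(2,1)$ block. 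The second sum gives $\widehat Q^D\widehat P^\pi=\begin{pmatrix}O&\widehat B^e\\ \widehat B^D\widehat A^\pi&-\widehat U\widehat B^D\end{pmatrix}$. Finally I would use $\widehat A\widehat A^\pi\widehat B^D=\widehat U\widehat B^D$ and check that the summation ranges truncate at $i_{\widehat B}-1$ using $\widehat B^\pi\widehat B^{i}=O$ for $i\ge i_{\widehat B}$.
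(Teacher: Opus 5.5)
Your proposal is correct and is essentially the paper's proof. It uses the same splitting $\widehat N=\mathrm{diag}(\widehat A\widehat A^e,O)+\left(\begin{smallmatrix}\widehat A\widehat A^\pi&\widehat B\\ I&O\end{smallmatrix}\right)$ with $\widehat P\widehat Q=O$, the same $\widehat Q^D$ and $\widehat Q^\pi=\mathrm{diag}(\widehat B^\pi,\widehat B^\pi)$, Lemma~\ref{PQ=0} with $\widehat P$ group invertible, and the same even/odd reindexing driven by $\widehat A\widehat A^\pi\widehat A^D=O$ together with $\widehat A\widehat A^\pi\widehat B=\widehat B\widehat A\widehat A^\pi$.

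Your treatment of the power condition through monomials $\widehat U^a\widehat B^b$ with $a+2b\ge m-1$ is more careful than the paper's displayed formula for $\widehat V^k\widehat V^\pi$, which drops the $\widehat A\widehat A^\pi$ terms. Like the paper, though, a bound such as $i_{\widehat A}+2i_{\widehat B}$ only certifies $\widehat Q^D$ as a ring-theoretic Drazin inverse; it does not verify \eqref{Def Dual Drazin inverse} at $k=\Ind_s(\widehat Q)$. This is harmless if Lemma~\ref{PQ=0} is read for ring Drazin inverses and you then invoke uniqueness together with the hypothesis $\widehat N\in\mathbb{DC}_z^{n\times n}$.
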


\begin{proof}
Consider the following decomposition of the matrix $\widehat {N}$ as follows:
\begin{align*}
   \widehat M=\begin{pmatrix}
        \widehat A\widehat A^e&O\\O&O
    \end{pmatrix}+\begin{pmatrix}
        \widehat A\widehat A^\pi&\widehat B\\I&O
    \end{pmatrix}:=\widehat U+\widehat V.
\end{align*}
Under the condition $\widehat A\widehat A^\pi \widehat B=\widehat B\widehat A\widehat A^\pi$, the dual Drazin inverse of $\widehat V$ admits the following explicit representation.
\begin{align*}
    \widehat V^D=\begin{pmatrix}
        \widehat A\widehat A^\pi&\widehat B\\I&O
    \end{pmatrix}^D=\begin{pmatrix}
        O&\widehat B^e\\\widehat B^D&-\widehat A\widehat A^\pi \widehat B^D
    \end{pmatrix},
\end{align*} and hence
\begin{align*}
    \widehat V^\pi=I-\widehat V\widehat V^D=\begin{pmatrix}
        \widehat B^\pi&O\\O&\widehat B^\pi
    \end{pmatrix}.
\end{align*}
Subsequently, let $\widehat X=\widehat V^D$. We verify that $\widehat X$ satisfies the defining equations of the dual Drazin inverse in \eqref{Def Dual Drazin inverse}. We first verify the commutativity condition $\widehat V\widehat X=\widehat X\widehat V$. A direct computation gives 
\begin{align}\label{VX}
    \widehat V\widehat X=\begin{pmatrix}
        \widehat B^e&\widehat A\widehat A^\pi \widehat B^e-\widehat B\widehat A\widehat A^\pi \widehat B^D\\O&\widehat B^e
    \end{pmatrix}=\begin{pmatrix}
        \widehat B^e&O\\O&\widehat B^e
    \end{pmatrix}.
\end{align}
Similarly,
\begin{align*}
    \widehat X\widehat V=\begin{pmatrix}
        \widehat B^e&O\\\widehat B^D\widehat A\widehat A^\pi-\widehat A\widehat A^\pi \widehat B^D&\widehat B^e
    \end{pmatrix}=\begin{pmatrix}
        \widehat B^e&O\\O&\widehat B^e
    \end{pmatrix}.
\end{align*}
Since $\widehat A\widehat A^\pi$ commutes with $\widehat B$ and hence with $\widehat B^D$, the $(2,1)$-block is zero. Therefore, $\widehat V\widehat X=\widehat X\widehat V$.

Then, from \eqref{VX}, we obtain
\begin{align*}
    \widehat X\widehat V\widehat X=\begin{pmatrix}
        O&\widehat B^e\\\widehat B^D&-\widehat A\widehat A^\pi \widehat B^D
    \end{pmatrix}\begin{pmatrix}
        \widehat B^e&O\\O&\widehat B^e
    \end{pmatrix}=\begin{pmatrix}
        O&\widehat B^e\\\widehat B^D&-\widehat A\widehat A^\pi \widehat B^D
    \end{pmatrix}=\widehat X.
\end{align*}
Note that $\widehat A^\pi$ is the spectral idempotent associated with the zero eigenvalue of $\widehat A$, it follows that $(\widehat A\widehat A^\pi)^l=O$ for all $l\geq i_{\widehat A}$. Moreover, there exists a positive integer number $k$ such that 
\[\widehat V^k\widehat V^\pi=
\left\{
\begin{aligned}
    &\begin{pmatrix}
        O&\widehat B^{\frac{k+1}{2}}\widehat B^\pi\\\widehat B^{\frac{k-1}{2}}\widehat B^\pi&O
    \end{pmatrix},~\text{k is odd;}\\
    &\begin{pmatrix}
        \widehat B^{\frac{k}{2}}\widehat B^\pi&O\\O&\widehat B^{\frac{k}{2}}\widehat B^\pi
    \end{pmatrix}~\text{k is even.}
\end{aligned}
\right.
\] 

Hence, the non-nilpotent part of $\widehat V$ is governed by $\widehat B$, whose index is finite. Consequently, there exists a positive integer $k$ such that $\widehat V^k\widehat V^\pi=O,$ i.e., $\widehat V^{k+1}\widehat X=\widehat V^k$. This completes the verification.

Furthermore, under the condition $\widehat V\widehat A\widehat A^e\widehat B=O$, we have $\widehat U\widehat V=O$. Define $\alpha=0$ for odd $i$ and $\alpha=1$ for even $i$. By Lemma \ref{PQ=0}, we conclude that
\begin{align*}
   \widehat {N}^D&=(\widehat U+\widehat V)^D=\sum_{i=0}^{i_{\widehat V}-1}\widehat V^i\widehat V^\pi \widehat U^{(i+1)D}+\widehat V^D\widehat U^\pi
    \\&=\begin{pmatrix}
        \alpha\sum\limits_{i=0}^{i_{\widehat V}-1}\widehat B^\pi \widehat B^{\frac{i}{2}}\widehat A^{(i+1)^D}&O
        \\(1-\alpha)\sum\limits_{i=0}^{i_{\widehat V}-1}\widehat B^\pi \widehat B^{[\frac{i-1}{2}]}\widehat A^{(i+1)D}&O
    \end{pmatrix}+\begin{pmatrix}
        O&\widehat B\widehat B^D\\\widehat B^D\widehat A^\pi &-\widehat A\widehat A^\pi \widehat B^D
    \end{pmatrix}
    \\&=\begin{pmatrix}
     \sum\limits_{i=0}^{i_{\widehat B}-1}\widehat B^\pi \widehat B^i\widehat A^{(2i+1)D}&\widehat B\widehat B^D\\
     \sum\limits_{i=0}^{i_{\widehat B}-1}\widehat B^\pi \widehat B^i\widehat A^{(2i+2)D}+\widehat B^D\widehat A^\pi&-\widehat A\widehat A^\pi \widehat B^D
 \end{pmatrix},
\end{align*}
where the identity $\widehat A\widehat A^\pi \widehat B^\pi \widehat A^D=\widehat A\widehat A^\pi (I-\widehat B^e)\widehat A^D=-\widehat A\widehat A^\pi \widehat B^e\widehat A^D=-\widehat B^e\widehat A\widehat A^\pi \widehat A^D=O$ is instrumental in the above computation.

This completes the proof. 
\end{proof}

The following theorem is the symmetric counterpart of Theorem \ref{DecompABIO1}, and the proof is omitted. 
\begin{theorem}\label{DecompABIO2}
   Let $\widehat N=\begin{pmatrix}
    \widehat A&\widehat B\\I&O
\end{pmatrix}\in\mathbb{DC}_z^{n\times n}$ be an anti-triangular block matrix, where $\widehat A$ and $\widehat B$ are of the same order and dual Drazin invertible. Assume that $\Ind_s(\widehat A)=i_{\widehat A}$, $\Ind_s(\widehat B)=i_{\widehat B}$ and the conditions $\widehat A\widehat A^\pi \widehat B=\widehat B\widehat A\widehat A^\pi$ and $\widehat B\widehat A\widehat A^e=O$ hold. Then
\begin{align*}
 \widehat N^D=\begin{pmatrix}
     \sum\limits_{i=0}^{i_{\widehat B}-1}\widehat A^{(2i+1)D}\widehat B^\pi \widehat B^i
     &\widehat A^\pi \widehat B\widehat B^D+\sum\limits_{i=0}^{i_{\widehat B}-1}\widehat A^{(2i+2)D}\widehat B^\pi \widehat B^{i+1}
     \\\sum\limits_{i=0}^{i_{\widehat B}-1}\widehat A^{(2i+2)D}\widehat B^\pi \widehat B^i+\widehat A^\pi \widehat B^D&-\widehat A\widehat A^\pi \widehat B^D-\widehat A^D\widehat B^e
     +\sum\limits_{i=0}^{i_{\widehat B}-1}\widehat A^{(2i+3)D}\widehat B^\pi \widehat B^{i+1}
 \end{pmatrix}.
\end{align*}
\end{theorem}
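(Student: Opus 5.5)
We follow the proof of Theorem \ref{DecompABIO1} with the order of the two summands reversed. We keep the splitting
\[
\widehat N=\widehat U+\widehat V,\qquad \widehat U=\begin{pmatrix}\widehat A\widehat A^e&O\\O&O\end{pmatrix},\qquad \widehat V=\begin{pmatrix}\widehat A\widehat A^\pi&\widehat B\\I&O\end{pmatrix}.
\]
The computation of $\widehat V^D$ in the proof of Theorem \ref{DecompABIO1} used only the commutation hypothesis $\widehat A\widehat A^\pi\widehat B=\widehat B\widehat A\widehat A^\pi$. It therefore carries over unchanged:
\[
\widehat V^D=\begin{pmatrix}O&\widehat B^e\\\widehat B^D&-\widehat A\widehat A^\pi\widehat B^D\end{pmatrix},\qquad \widehat V^\pi=\operatorname{diag}(\widehat B^\pi,\widehat B^\pi).
\]
The new hypothesis $\widehat B\widehat A\widehat A^e=O$ gives $\widehat V\widehat U=O$ instead of $\widehat U\widehat V=O$. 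Indeed, $\widehat V\widehat U$ has blocks $\widehat A\widehat A^\pi\widehat A\widehat A^e=O$ and $\widehat A\widehat A^e$. Here I expect a subtlety: the $(2,1)$ block of $\widehat V\widehat U$ is $\widehat A\widehat A^e$, not zero. So the plain ordering fails as well, and the decomposition must be chosen so that the product vanishes.

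First I would try the transposed splitting $\widehat U'=\begin{pmatrix}\widehat A\widehat A^e&O\\O&O\end{pmatrix}$ against a reordering that uses the symmetry behind Theorem \ref{DecompABIO2}. Concretely, I would apply Cline's formula (Lemma \ref{cline}) to write
\[
\widehat N=\begin{pmatrix}\widehat A&\widehat B\\I&O\end{pmatrix}=\begin{pmatrix}I&\widehat A\\O&I\end{pmatrix}\begin{pmatrix}O&\widehat B\\I&O\end{pmatrix}\cdot\ldots
\]
More cleanly, I would use the similarity/transpose trick. Setting $\widehat J=\begin{pmatrix}O&I\\I&O\end{pmatrix}$, the matrix $\widehat J\widehat N^{T}\widehat J$ is not of the right form, so instead I would split directly as
\[
\widehat N=\begin{pmatrix}\widehat A\widehat A^e&\widehat A^e\widehat B\\O&O\end{pmatrix}+\begin{pmatrix}\widehat A\widehat A^\pi&\widehat A^\pi\widehat B\\I&O\end{pmatrix}=:\widehat P+\widehat Q.
\]
Then I would check $\widehat P\widehat Q=O$ using $\widehat B\widehat A\widehat A^e=O$ together with $\widehat A^e$ commuting with $\widehat A^\pi\widehat B$. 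The latter follows from the commutation hypothesis after multiplying by $\widehat A^D$-powers. The $(1,1)$ block of $\widehat P\widehat Q$ is $\widehat A\widehat A^e\widehat A\widehat A^\pi+\widehat A^e\widehat B=\widehat A^e\widehat B$, which should vanish on the relevant range via $\widehat A^D\widehat B\widehat A\widehat A^e=O$. This is the step I expect to be the main obstacle, namely choosing the splitting so that one product vanishes exactly. If it fails, I would fall back on Theorem \ref{DecompABIO1} applied to a companion matrix via Lemma \ref{cline}, factoring $\widehat N$ as a product of two block matrices and swapping the factors. Swapping converts the condition $\widehat B\widehat A\widehat A^e=O$ into the form $\widehat A\widehat A^e\widehat B=O$ required there.

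Once the vanishing product is in place, Lemma \ref{PQ=0} gives $\widehat N^D$ as a sum of terms $\widehat Q^\pi\widehat Q^i\widehat P^{(i+1)D}$ and $\widehat Q^{(i+1)D}\widehat P^i\widehat P^\pi$. Here $\widehat P^D$ is computed by Lemma \ref{dualtri}, since $\widehat P$ is block upper triangular with $\widehat P^D=\begin{pmatrix}\widehat A^D&\widehat A^{2D}\widehat B\\O&O\end{pmatrix}$.

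The final step is to evaluate the powers $\widehat Q^i\widehat Q^\pi$ by the parity formula already obtained in the proof of Theorem \ref{DecompABIO1}. I would separate even and odd $i$ and reindex by $i\mapsto 2i$ and $i\mapsto 2i+1$, which produces the sums over $0\le i\le i_{\widehat B}-1$. The extra term $\widehat A^{(2i+3)D}\widehat B^\pi\widehat B^{i+1}$ in the $(2,2)$ block arises from the $(1,2)$ block of $\widehat P^D$ multiplied against $\widehat B$. Throughout I would use the identities $\widehat A\widehat A^\pi\widehat A^D=O$ and $\widehat B^e\widehat A\widehat A^\pi=\widehat A\widehat A^\pi\widehat B^e$ to discard cross terms.
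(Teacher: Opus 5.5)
\textbf{There is a genuine gap, at exactly the step you flagged.} Your splitting $\widehat N=\widehat P+\widehat Q$ gives no vanishing product under the hypotheses of this theorem. A direct computation gives
\[
\widehat P\widehat Q=\begin{pmatrix}\widehat A^e\widehat B&O\\O&O\end{pmatrix},\qquad
\widehat Q\widehat P=\begin{pmatrix}O&O\\\widehat A\widehat A^e&\widehat A^e\widehat B\end{pmatrix}.
\]
The hypothesis $\widehat B\widehat A\widehat A^e=O$ is equivalent to $\widehat B\widehat A^e=O$ (multiply on the right by $\widehat A^D$, or by $\widehat A$ for the converse). It constrains $\widehat B\widehat A^e$, not $\widehat A^e\widehat B$. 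Vanishing of $\widehat A^e\widehat B$ is precisely the hypothesis $\widehat A\widehat A^e\widehat B=O$ of Theorem \ref{DecompABIO1}.

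A concrete instance: take zero infinitesimal parts, $A=\mathrm{diag}(1,0)$ and $B=\begin{pmatrix}0&1\\0&1\end{pmatrix}$. Both hypotheses hold, since $AA^\pi=O$ and $BA^e=O$, and the stated formula can be checked to be correct there. But $A^eB=\begin{pmatrix}0&1\\0&0\end{pmatrix}\neq O$ and $AA^e\neq O$, so Lemma \ref{PQ=0} applies in neither order.

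Your fallback does not repair this. Swapping factors in Lemma \ref{cline} never reverses the order in which $\widehat A$ and $\widehat B$ multiply. The natural factorization $\widehat N=\begin{pmatrix}\widehat A&I\\I&O\end{pmatrix}\begin{pmatrix}I&O\\O&\widehat B\end{pmatrix}$ swaps to $\widehat L=\begin{pmatrix}\widehat A&I\\\widehat B&O\end{pmatrix}$, which is not of the shape covered by Theorem \ref{DecompABIO1}. Separately, your $\widehat Q$ has $(1,2)$ block $\widehat A^\pi\widehat B$, so the parity formula for $\widehat V^k\widehat V^\pi$ would not carry over verbatim.

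\textbf{The missing idea is transposition.} The paper omits the proof and calls the result the ``symmetric counterpart'' of Theorem \ref{DecompABIO1}. The map $\widehat X\mapsto\widehat X^T$ reverses products, since $\mathbb{DC}$ is commutative. It preserves the three defining equations, so it commutes with $(\cdot)^D$, $(\cdot)^e$ and $(\cdot)^\pi$, and it preserves $\Ind_s$.

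Now $\widehat N^T=\begin{pmatrix}\widehat A^T&I\\\widehat B^T&O\end{pmatrix}$ satisfies the hypotheses of Theorem \ref{DecompABCO1} for the triple $(\widehat A^T,I,\widehat B^T)$. Indeed, the transposes of the two assumptions are $\widehat A^T(\widehat A^T)^\pi\widehat B^T=\widehat B^T\widehat A^T(\widehat A^T)^\pi$ and $\widehat A^T(\widehat A^T)^e\widehat B^T=O$. Transposing that block formula back, which also swaps the off-diagonal blocks, gives exactly the stated matrix. For example, the $(2,2)$ block $-\widehat B^T(\widehat B^T)^{2D}\widehat A^T(\widehat A^T)^\pi-\widehat B^T(\widehat B^T)^D(\widehat A^T)^D$ becomes $-\widehat A\widehat A^\pi\widehat B^D-\widehat A^D\widehat B^e$.

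If you prefer to avoid transposes, work with $\widehat L$ instead. Use Lemma \ref{cline} to reduce $\widehat N$ to $\widehat L$, and split
\[
\widehat L=\begin{pmatrix}\widehat A\widehat A^e&O\\O&O\end{pmatrix}+\begin{pmatrix}\widehat A\widehat A^\pi&I\\\widehat B&O\end{pmatrix}.
\]
The second summand times the first is $\begin{pmatrix}O&O\\\widehat B\widehat A\widehat A^e&O\end{pmatrix}=O$, which is where the hypothesis genuinely enters. Then apply Lemma \ref{PQ=0} with the second summand as $\widehat P$, and return to $\widehat N$ via Lemma \ref{cline}.
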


To obtain the explicit formulae for the dual Drazin inverse of the anti-triangular block matrix $\widehat M=\begin{pmatrix}
    \widehat A&\widehat B\\\widehat C&O
\end{pmatrix}$ under new assumptions, we apply Theorem \ref{DecompABIO1} and Theorem \ref{DecompABIO2}.
\begin{theorem}\label{DecompABCO1}
   Let $\widehat M=\begin{pmatrix}
    \widehat A&\widehat B\\\widehat C&O
\end{pmatrix}\in\mathbb{DC}_z^{n\times n}$ be an anti-triangular block matrix, where $\widehat A$ and $\widehat B\widehat C$ are of the same order and dual Drazin invertible. Assume that $\Ind_s(\widehat A)=i_{\widehat A}$, $\Ind_s(\widehat B\widehat C)=i_{\widehat B\widehat C}$ and the conditions $\widehat A\widehat A^\pi \widehat B\widehat C=\widehat B\widehat C\widehat A\widehat A^\pi$ and $\widehat A\widehat A^e\widehat B\widehat C=O$ hold. Then
\begin{align}\label{E_i}
 \widehat M^D=\begin{pmatrix}
     \sum\limits_{i=0}^{i_{\widehat B\widehat C}-1}(\widehat B\widehat C)^\pi (\widehat B\widehat C)^i\widehat A^{(2i+1)D}
     &\sum\limits_{i=0}^{i_{\widehat B\widehat C}-1}(\widehat B\widehat C)^\pi (\widehat B\widehat C)^{i}\widehat A^{(2i+2)D}\widehat B+(\widehat B\widehat C)^D\widehat A^\pi \widehat B\\
     \\\sum\limits_{i=0}^{i_{\widehat B\widehat C}-1}\widehat C(\widehat B\widehat C)^\pi (\widehat B\widehat C)^i\widehat A^{(2i+2)D}&+\sum\limits_{i=0}^{i_{\widehat B\widehat C}-1}\widehat C(\widehat B\widehat C)^\pi (\widehat B\widehat C)^{i}\widehat A^{(2i+3)D}\widehat B
     \\+\widehat C(\widehat B\widehat C)^D\widehat A^\pi&-\widehat C(\widehat B\widehat C)^{2D}\widehat A\widehat A^\pi \widehat B-\widehat C(\widehat B\widehat C)^D\widehat A^D\widehat B
 \end{pmatrix}.
\end{align}
\end{theorem}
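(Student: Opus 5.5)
The plan is to reduce to Theorem \ref{DecompABIO1} by a Cline-type factorization. Write
\[
\widehat M=\begin{pmatrix}\widehat A&\widehat B\\ \widehat C&O\end{pmatrix}
=\begin{pmatrix}I&O\\ O&\widehat C\end{pmatrix}\begin{pmatrix}\widehat A&\widehat B\\ I&O\end{pmatrix}:=\widehat P\widehat Q .
\]
Note that $\widehat P$ is square. By Lemma \ref{cline}, $\widehat M^D=(\widehat P\widehat Q)^D=\widehat P(\widehat Q\widehat P)^{2D}\widehat Q$, where
\[
\widehat Q\widehat P=\begin{pmatrix}\widehat A&\widehat B\widehat C\\ I&O\end{pmatrix}.
\]
This is exactly the matrix $\widehat N$ of Theorem \ref{DecompABIO1}, with $\widehat B$ replaced by $\widehat B\widehat C$. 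The hypotheses $\widehat A\widehat A^\pi\widehat B\widehat C=\widehat B\widehat C\widehat A\widehat A^\pi$ and $\widehat A\widehat A^e\widehat B\widehat C=O$ are precisely those of that theorem. Hence $(\widehat Q\widehat P)^D=:\widehat X$ is given explicitly by Theorem \ref{DecompABIO1}, with blocks
\[
X_{11}=\sum_i (\widehat B\widehat C)^\pi(\widehat B\widehat C)^i\widehat A^{(2i+1)D},\qquad X_{12}=(\widehat B\widehat C)^e,
\]
\[
X_{21}=\sum_i(\widehat B\widehat C)^\pi(\widehat B\widehat C)^i\widehat A^{(2i+2)D}+(\widehat B\widehat C)^D\widehat A^\pi,\qquad X_{22}=-\widehat A\widehat A^\pi(\widehat B\widehat C)^D.
\]

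The cleaner route avoids squaring $\widehat X$ directly. Since $\widehat X^2\,\widehat Q\widehat P=\widehat X$, we have
\[
\widehat P\widehat X^2\widehat Q=\widehat P\widehat X\,(\widehat X\widehat Q).
\]
Alternatively, use the equivalent Cline form $(\widehat P\widehat Q)^D=\widehat P\widehat X^2\widehat Q$ and compute $\widehat X\widehat Q$ first. Here
\[
\widehat X\widehat Q=\begin{pmatrix}X_{11}\widehat A+X_{12}& X_{11}\widehat B\\ X_{21}\widehat A+X_{22}&X_{21}\widehat B\end{pmatrix}.
\]
Then multiply on the left by $\widehat P\widehat X=\begin{pmatrix}X_{11}&X_{12}\\ \widehat CX_{21}&\widehat CX_{22}\end{pmatrix}$. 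Conceptually, the first block row of $\widehat M^D$ should equal the first block row of $\widehat X$ times $\widehat X\widehat Q$ restricted appropriately. The target formula shows that the $(1,1)$ block equals $X_{11}$ and the $(2,1)$ block equals $\widehat C X_{21}$. So I would aim to prove the identity $\widehat M^D=\widehat P\widehat X\,\widehat T$ for a simple $\widehat T$.

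Concretely, I expect $\widehat X^2\widehat Q P$-type simplifications to reduce to the following: the first block column of $\widehat X^2\widehat Q$ coincides with the first block column of $\widehat X$. This holds because $\widehat Q$ and $\widehat Q\widehat P$ share the first block column $(\widehat A,I)^T$, and $\widehat X^2(\widehat Q\widehat P)=\widehat X$. That immediately gives the $(1,1)$ and $(2,1)$ blocks of $\widehat M^D$ as $X_{11}$ and $\widehat CX_{21}$, matching the statement.

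The second block column of $\widehat M^D$ is $\widehat P\widehat X^2\begin{pmatrix}\widehat B\\O\end{pmatrix}$, which equals $\widehat P\widehat X\begin{pmatrix}X_{11}\widehat B\\ X_{21}\widehat B\end{pmatrix}$. This yields
\[
X_{11}^2\widehat B+X_{12}X_{21}\widehat B\quad\text{and}\quad \widehat C(X_{21}X_{11}+X_{22}X_{21})\widehat B.
\]
The main obstacle is simplifying these products. To do so I will use:
\begin{itemize}
\item $(\widehat B\widehat C)^e(\widehat B\widehat C)^\pi=O$;
\item $\widehat A^\pi\widehat A^D=O$;
\item the commutation of $\widehat A\widehat A^\pi$ with $\widehat B\widehat C$, and hence with $(\widehat B\widehat C)^D$ and $(\widehat B\widehat C)^\pi$;
\item $\widehat A\widehat A^e\widehat B\widehat C=O$, which gives $\widehat A^D\widehat B\widehat C=O$ after multiplying by $\widehat A^{2D}$;
\item the identity $\widehat A\widehat A^\pi(\widehat B\widehat C)^\pi\widehat A^D=O$ used in Theorem \ref{DecompABIO1}.
\end{itemize}
For instance, $X_{11}^2$ should collapse, using $\widehat A^D(\widehat B\widehat C)=O$, to $\sum_i(\widehat B\widehat C)^\pi(\widehat B\widehat C)^i\widehat A^{(2i+2)D}$; indeed, cross terms with $(\widehat B\widehat C)^{j}$, $j\ge1$, after $\widehat A^D$ vanish. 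Meanwhile $X_{12}X_{21}=(\widehat B\widehat C)^D\widehat A^\pi$. These give the $(1,2)$ block.

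The $(2,2)$ block arises similarly. The term $X_{21}X_{11}$ yields the $\widehat A^{(2i+3)D}$ sum and the term $-(\widehat B\widehat C)^D\widehat A^D$. The term $X_{22}X_{21}$ yields $-(\widehat B\widehat C)^{2D}\widehat A\widehat A^\pi$, since $\widehat A\widehat A^\pi\widehat A^D=O$ and $\widehat A^\pi$ is idempotent. I expect the careful bookkeeping of index ranges and of which cross terms vanish to be the only delicate part.
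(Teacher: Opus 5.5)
Your proposal is correct and takes the paper's route: the factorization $\widehat M=\mathrm{diag}(I,\widehat C)\begin{pmatrix}\widehat A&\widehat B\\ I&O\end{pmatrix}$, Lemma \ref{cline}, and Theorem \ref{DecompABIO1} applied to $\begin{pmatrix}\widehat A&\widehat B\widehat C\\ I&O\end{pmatrix}$, followed by simplification using $\widehat A^D\widehat B\widehat C=O$, $(\widehat B\widehat C)^D(\widehat B\widehat C)^\pi=O$ and the commutation of $\widehat A\widehat A^\pi$ with $(\widehat B\widehat C)^D$. Your observation that $\widehat X^2\widehat Q$ and $\widehat X$ share the first block column (since $\widehat X^2\widehat Q\widehat P=\widehat X$) is a tidy shortcut. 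It spares you the paper's full multiplication of $\widehat P\widehat X$ by $\widehat X\widehat Q$, where the $(1,1)$ block contains $(\widehat B\widehat C)^D\widehat A\widehat A^\pi-\widehat A\widehat A^\pi(\widehat B\widehat C)^D$, which has to be cancelled by commutativity.
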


\begin{proof}
The anti-triangular matrix $\widehat M$ can be decomposed as follows:   
\begin{align*}
    \widehat M=\begin{pmatrix}
        I&O\\O&\widehat C
    \end{pmatrix}\begin{pmatrix}
        \widehat A&\widehat B\\I&O
    \end{pmatrix}:=\widehat H\widehat K.
\end{align*}
Utilizing Lemma \ref{cline}, we obtain
\begin{align}\label{Mcline}
    \widehat M^D=\widehat H(\widehat K\widehat H)^{2D}\widehat K=\begin{pmatrix}
        I&O\\O&\widehat C
    \end{pmatrix}\begin{pmatrix}
        \widehat A&\widehat B\widehat C\\I&O
    \end{pmatrix}^{2D}\begin{pmatrix}
        \widehat A&\widehat B\\I&O
    \end{pmatrix}.
\end{align}
Furthermore, under the conditions  $\widehat A\widehat A^\pi \widehat B\widehat C=\widehat B\widehat C\widehat A\widehat A^\pi$ and $\widehat AA^e\widehat B\widehat C=O$, by applying Theorem \ref{DecompABIO1}, we get 
\begin{align*}
    (\widehat K\widehat H)^D=\begin{pmatrix}
     \sum\limits_{i=0}^{i_{\widehat B\widehat C}-1}(\widehat B\widehat C)^\pi (\widehat B\widehat C)^i\widehat A^{(2i+1)D}&(\widehat B\widehat C)^e\\
     \sum\limits_{i=0}^{i_{\widehat B\widehat C}-1}(\widehat B\widehat C)^\pi (\widehat B\widehat C)^i\widehat A^{(2i+2)D}+(\widehat B\widehat C)^D\widehat A^\pi&-\widehat A\widehat A^\pi (\widehat B\widehat C)^D
 \end{pmatrix}
\end{align*}
By routine calculations, we derive
\begin{align*}
   \begin{pmatrix}
        I&O\\O&\widehat C
    \end{pmatrix}\begin{pmatrix}
        \widehat A&\widehat B\widehat C\\I&O
    \end{pmatrix}^{D}=\begin{pmatrix}
     \sum\limits_{i=0}^{i_{\widehat B\widehat C}-1}(\widehat B\widehat C)^\pi (\widehat B\widehat C)^i\widehat A^{(2i+1)D}&(\widehat B\widehat C)^e\\
     \sum\limits_{i=0}^{i_{\widehat B\widehat C}-1}\widehat C(\widehat B\widehat C)^\pi (\widehat B\widehat C)^i\widehat A^{(2i+2)D}+\widehat C(\widehat B\widehat C)^D\widehat A^\pi&-\widehat C\widehat A\widehat A^\pi (\widehat B\widehat C)^D
 \end{pmatrix}, 
\end{align*}and
\begin{align*}
    \begin{pmatrix}
        \widehat A&\widehat B\widehat C\\I&O
    \end{pmatrix}^{D}\begin{pmatrix}
        \widehat A&\widehat B\\I&O
    \end{pmatrix}=\begin{pmatrix}
     \sum\limits_{i=0}^{i_{\widehat B\widehat C}-1}(\widehat B\widehat C)^\pi (\widehat B\widehat C)^i\widehat A^{(2i+1)D}\widehat A+(\widehat B\widehat C)^e&
     \sum\limits_{i=0}^{i_{\widehat B\widehat C}-1}(\widehat B\widehat C)^\pi (\widehat B\widehat C)^i\widehat A^{(2i+1)D}\widehat B\\
     \\\sum\limits_{i=0}^{i_{\widehat B\widehat C}-1}(\widehat B\widehat C)^\pi (\widehat B\widehat C)^i\widehat A^{(2i+1)D}&\sum\limits_{i=0}^{i_{\widehat B\widehat C}-1}(\widehat B\widehat C)^\pi (\widehat B\widehat C)^i\widehat A^{(2i+2)D}\widehat B\\+(\widehat B\widehat C)^D\widehat A^\pi \widehat A-\widehat A\widehat A^\pi (\widehat B\widehat C)^D&+(\widehat B\widehat C)^D\widehat A^\pi \widehat B
 \end{pmatrix}.
\end{align*}
Furthermore, by substituting the above representations into \eqref{Mcline}, we obtain
{\small
\begin{align*}
    \widehat M^D=\begin{pmatrix}
         \sum\limits_{i=0}^{i_{\widehat B\widehat C}-1}(\widehat B\widehat C)^\pi (\widehat B\widehat C)^i\widehat A^{(2i+1)D}+(\widehat B\widehat C)^D\widehat A\widehat A^\pi
         & \sum\limits_{i=0}^{i_{\widehat B\widehat C}-1}(\widehat B\widehat C)^\pi (\widehat B\widehat C)^i\widehat A^{(2i+2)D}\widehat B
         \\-\widehat A\widehat A^\pi (\widehat B\widehat C)^D&+(\widehat B\widehat C)^D\widehat A^\pi \widehat B\\
         \\\sum\limits_{i=0}^{i_{\widehat B\widehat C}-1}\widehat C(\widehat B\widehat C)^\pi (\widehat B\widehat C)^i\widehat A^{(2i+2)D}+\widehat C(\widehat B\widehat C)^D\widehat A^\pi(\widehat B\widehat C)^e
         &\sum\limits_{i=0}^{i_{\widehat B\widehat C}-1}\widehat C(\widehat B\widehat C)^\pi (\widehat B\widehat C)^i\widehat A^{(2i+3)D}\widehat B
         \\+\sum\limits_{i=0}^{i_{\widehat B\widehat C}-1}\widehat C(\widehat B\widehat C)^D\widehat A^\pi
         (\widehat B\widehat C)^\pi (\widehat B\widehat C)^i\widehat A^{(2i+1)D}\widehat A
         &+\sum\limits_{i=0}^{i_{\widehat B\widehat C}-1}\widehat C(\widehat B\widehat C)^D\widehat A^\pi
         (\widehat B\widehat C)^\pi (\widehat B\widehat C)^i\widehat A^{(2i+1)D}\widehat B\\-\widehat C\widehat A\widehat A^\pi(\widehat B\widehat C)^{2D}\widehat A^\pi \widehat A+\widehat C\widehat A\widehat A^\pi(\widehat B\widehat C)^{D}\widehat A^\pi \widehat A(\widehat B\widehat C)^D&-\widehat C\widehat A\widehat A^\pi(\widehat B\widehat C)^{2D}\widehat A^\pi \widehat B
    \end{pmatrix}.
\end{align*}
}
After straightforward simplifications, the proof is complete. 
\end{proof}


In 2009, Catral et al. \cite{Catral2009} introduced a representation of the Drazin inverse of a bipartite matrix over $\mathbb{C}$ with block form $\widehat {\widetilde{M}}$, which is a direct consequence of Corollary \ref{OBCO}.

\begin{corollary}\label{OBCO}
     Let $\widehat {\widetilde{M}}=\begin{pmatrix}
    O&\widehat B\\\widehat C&O
\end{pmatrix}\in\mathbb{DC}_z^{n\times n}$ be an anti-triangular block matrix, where the square matrix $\widehat B\widehat C$ is dual Drazin invertible. Then
\begin{align*}
    \widehat {\widetilde{M}}^D=\begin{pmatrix}
      O&(\widehat B\widehat C)^{D}\widehat B\\\widehat C(\widehat B\widehat C)^{D}&O
    \end{pmatrix}.
\end{align*}
\end{corollary}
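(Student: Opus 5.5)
The corollary follows by specializing Theorem \ref{DecompABCO1} to $\widehat A=O$. First I check the hypotheses. The zero matrix is dual Drazin invertible with $\widehat A^D=O$, hence $\widehat A^e=O$ and $\widehat A^\pi=I$. The condition $\widehat A\widehat A^\pi\widehat B\widehat C=\widehat B\widehat C\widehat A\widehat A^\pi$ then reads $O=O$, and $\widehat A\widehat A^e\widehat B\widehat C=O$ holds trivially. So Theorem \ref{DecompABCO1} applies, with $\widehat B\widehat C$ dual Drazin invertible by assumption.

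Next I substitute into the block formula \eqref{E_i}. Every term containing $\widehat A^{(j)D}$ with $j\ge 1$ vanishes. This kills the whole $(1,1)$ block, the sums in the $(1,2)$ and $(2,1)$ blocks, and the sum in the $(2,2)$ block. The term $\widehat C(\widehat B\widehat C)^{2D}\widehat A\widehat A^\pi\widehat B$ vanishes because of the factor $\widehat A=O$, and $\widehat C(\widehat B\widehat C)^D\widehat A^D\widehat B$ vanishes because $\widehat A^D=O$. What survives is $(\widehat B\widehat C)^D\widehat A^\pi\widehat B=(\widehat B\widehat C)^D\widehat B$ in the $(1,2)$ position and $\widehat C(\widehat B\widehat C)^D\widehat A^\pi=\widehat C(\widehat B\widehat C)^D$ in the $(2,1)$ position. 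This is exactly the claimed formula.

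The only delicate point is that Theorem \ref{DecompABCO1}, and the Cline factorization behind it, implicitly treat $\widehat A$ and $\widehat B\widehat C$ as the same size. In the corollary, however, $\widehat B$ and $\widehat C$ may be rectangular and the two diagonal blocks may have different orders. If the proof of Theorem \ref{DecompABCO1} does not clearly allow this, I would argue directly instead. Let $\widehat X$ be the proposed matrix. Then
\[
\widehat{\widetilde M}\widehat X=\begin{pmatrix}(\widehat B\widehat C)^e&O\\O&\widehat C(\widehat B\widehat C)^D\widehat B\end{pmatrix}=\widehat X\widehat{\widetilde M}.
\]
From this, $\widehat X\widehat{\widetilde M}\widehat X=\widehat X$ follows using $(\widehat B\widehat C)^D\widehat B\widehat C(\widehat B\widehat C)^D=(\widehat B\widehat C)^D$, together with $\widehat C(\widehat B\widehat C)^D\widehat B\,\widehat C(\widehat B\widehat C)^D=\widehat C(\widehat B\widehat C)^D$.

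For the index equation, even powers of $\widehat{\widetilde M}$ are $\operatorname{diag}((\widehat B\widehat C)^m,(\widehat C\widehat B)^m)$, and odd powers are off-diagonal. Taking $k=2i_{\widehat B\widehat C}+2$, I would verify $\widehat{\widetilde M}^{k+1}\widehat X=\widehat{\widetilde M}^k$ blockwise. This uses $(\widehat B\widehat C)^{m+1}(\widehat B\widehat C)^D=(\widehat B\widehat C)^m$ for $m\ge i_{\widehat B\widehat C}$, and the identity $(\widehat C\widehat B)^{m+1}\widehat C(\widehat B\widehat C)^D\widehat B=\widehat C(\widehat B\widehat C)^{m+1}(\widehat B\widehat C)^D\widehat B$. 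Uniqueness of the dual Drazin inverse then finishes the argument.
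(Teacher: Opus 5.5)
Your proposal is correct and follows the paper's intended route: the corollary appears immediately after Theorem \ref{DecompABCO1} as its specialization to $\widehat A=O$ (so $\widehat A^D=O$, $\widehat A^\pi=I$), and your block-by-block substitution into \eqref{E_i} is accurate. Your size worry is unnecessary, because in the factorization $\widehat M=\widehat H\widehat K$ only $\widehat A$ and $\widehat B\widehat C$ must have the same order, which holds automatically even when $\widehat B,\widehat C$ are rectangular. Your direct verification is therefore a valid but redundant backup; if you keep it, note that uniqueness still holds when the power equation is checked at an exponent larger than the index.
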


The following theorem can be proved analogously to Theorem \ref{DecompABCO1}, and the proof is omitted.
\begin{theorem}\label{DecompABCO2}
  Let $\widehat M=\begin{pmatrix}
    \widehat A&\widehat B\\\widehat C&O
\end{pmatrix}\in\mathbb{DC}_z^{n\times n}$ be an anti-triangular block matrix, where $\widehat A$ and $\widehat B\widehat C$ are of the same order and dual Drazin invertible. Assume that $\Ind_s(\widehat A)=i_{\widehat A}$, $\Ind_s(\widehat B\widehat C)=i_{\widehat B\widehat C}$ and the conditions $\widehat A\widehat A^\pi \widehat B\widehat C=\widehat B\widehat C\widehat A\widehat A^\pi$ and $\widehat B\widehat C\widehat A\widehat A^e=O$ hold. Then
\begin{align*}
\widehat M^D=\begin{pmatrix}
     \sum\limits_{i=0}^{i_{\widehat B\widehat C}-1}\widehat A^{(2i+2)D}(\widehat B\widehat C)^\pi (\widehat B\widehat C)^i\widehat A+\widehat A^D(\widehat B\widehat C)^e
     &\sum\limits_{i=0}^{i_{\widehat B\widehat C}-1}\widehat A^{(2i+2)D}(\widehat B\widehat C)^\pi (\widehat B\widehat C)^{i}\widehat B
     \\+\sum\limits_{i=0}^{i_{\widehat B\widehat C}-1}\widehat A^{(2i+3)D}(\widehat B\widehat C)^\pi (\widehat B\widehat C)^{i+1}&+\widehat A^\pi(\widehat B\widehat C)^D\widehat B\\
     \\\sum\limits_{i=0}^{i_{\widehat B\widehat C}-1}\widehat C\widehat A^{(2i+3)D}(\widehat B\widehat C)^\pi (\widehat B\widehat C)^i\widehat A-\widehat C\widehat A^{2D}(\widehat B\widehat C)^e
     &\sum\limits_{i=0}^{i_{\widehat B\widehat C}-1}\widehat C\widehat A^{(2i+3)D}(\widehat B\widehat C)^\pi (\widehat B\widehat C)^{i}\widehat B
     \\+\sum\limits_{i=0}^{i_{\widehat B\widehat C}-1}\widehat C\widehat A^{(2i+4)D}(\widehat B\widehat C)^\pi (\widehat B\widehat C)^{i+1}+\widehat C\widehat A^\pi(\widehat B\widehat C)^D&-\widehat C\widehat A\widehat A^\pi(\widehat B\widehat C)^{2D}\widehat B-\widehat C\widehat A^D(\widehat B\widehat C)^D\widehat B
 \end{pmatrix}.
\end{align*}
\end{theorem}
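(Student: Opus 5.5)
We mirror the proof of Theorem \ref{DecompABCO1}, replacing Theorem \ref{DecompABIO1} by its counterpart Theorem \ref{DecompABIO2}. We use the same factorization
\[
\widehat M=\begin{pmatrix} I&O\\O&\widehat C\end{pmatrix}\begin{pmatrix}\widehat A&\widehat B\\I&O\end{pmatrix}=\widehat H\widehat K,
\qquad
\widehat K\widehat H=\begin{pmatrix}\widehat A&\widehat B\widehat C\\I&O\end{pmatrix}.
\]
Lemma \ref{cline} then gives $\widehat M^D=\widehat H(\widehat K\widehat H)^{2D}\widehat K$. The matrix $\widehat K\widehat H$ has exactly the shape of Theorem \ref{DecompABIO2}, with $\widehat B$ replaced by $\widehat B\widehat C$. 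The hypotheses $\widehat A\widehat A^\pi\widehat B\widehat C=\widehat B\widehat C\widehat A\widehat A^\pi$ and $\widehat B\widehat C\widehat A\widehat A^e=O$ are precisely what that theorem needs. So we can write $(\widehat K\widehat H)^D$ explicitly as a $2\times2$ block matrix $\begin{pmatrix}\widehat X_{11}&\widehat X_{12}\\ \widehat X_{21}&\widehat X_{22}\end{pmatrix}$, with the blocks read off from Theorem \ref{DecompABIO2}.

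Following the paper's computation in Theorem \ref{DecompABCO1}, we then split $\widehat H(\widehat K\widehat H)^{2D}\widehat K$ as $[\widehat H(\widehat K\widehat H)^D]\,[(\widehat K\widehat H)^D\widehat K]$. The left factor only multiplies the second block row by $\widehat C$. The right factor is
\[
\begin{pmatrix}\widehat X_{11}\widehat A+\widehat X_{12} & \widehat X_{11}\widehat B\\ \widehat X_{21}\widehat A+\widehat X_{22} & \widehat X_{21}\widehat B\end{pmatrix}.
\]
Multiplying the two factors gives each block of $\widehat M^D$ as a sum of products of these pieces.

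Simplification uses the following identities:
\begin{itemize}
\item $\widehat A^\pi\widehat A^D=O$ and $\widehat A^e\widehat A^{kD}=\widehat A^{kD}$;
\item $(\widehat B\widehat C)^\pi(\widehat B\widehat C)^e=O$;
\item $\widehat B\widehat C$, and hence $(\widehat B\widehat C)^D$, $(\widehat B\widehat C)^\pi$ and $(\widehat B\widehat C)^e$, commute with $\widehat A\widehat A^\pi$;
\item the hypothesis $\widehat B\widehat C\widehat A\widehat A^e=O$ gives $\widehat B\widehat C\widehat A^D=\widehat B\widehat C\widehat A\widehat A^e\widehat A^{2D}=O$, so $(\widehat B\widehat C)^e\widehat A^D=O$ and $(\widehat B\widehat C)^\pi\widehat A^{kD}=\widehat A^{kD}$;
\item $(\widehat B\widehat C)^{i_{\widehat B\widehat C}}(\widehat B\widehat C)^\pi=O$, which lets the index-shifted sums telescope or truncate.
\end{itemize}
With these, every product of the form $\widehat A^{mD}\cdots(\widehat B\widehat C)^\pi(\widehat B\widehat C)^i\cdot(\widehat B\widehat C)^e\cdots$ collapses, and the cross terms between the two sums merge into single sums with shifted powers $\widehat A^{(2i+2)D}$, $\widehat A^{(2i+3)D}$ and $\widehat A^{(2i+4)D}$.

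The main obstacle is this bookkeeping step: collapsing the products of the two sums into single sums with the stated exponents, and in particular matching the terms $\widehat A^D(\widehat B\widehat C)^e$, $-\widehat C\widehat A^{2D}(\widehat B\widehat C)^e$ and $-\widehat C\widehat A^D(\widehat B\widehat C)^D\widehat B$. We would treat the $(1,1)$ block first, since it fixes the pattern, and check each cancellation against the identity $(\widehat B\widehat C)^e\widehat A^D=O$. As an independent check, or as an alternative if the simplification becomes unwieldy, we would verify directly that the candidate matrix satisfies the three equations in \eqref{Def Dual Drazin inverse}; by uniqueness of the dual Drazin inverse, that proves the theorem.
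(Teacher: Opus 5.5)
\textbf{Verdict.} Your route is the one the paper intends; the paper omits the proof and says only that it is analogous to Theorem \ref{DecompABCO1}. Theorem \ref{DecompABIO2} does apply to $\widehat K\widehat H=\begin{pmatrix}\widehat A&\widehat B\widehat C\\ I&O\end{pmatrix}$. The gap is at the end: no bookkeeping can produce the displayed $(1,1)$ block, because that block is wrong by a sign. The statement as printed is false.

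\textbf{Where the sign comes from.} Write $\widehat\Phi=\widehat B\widehat C$ and $(\widehat K\widehat H)^D=(\widehat X_{ij})$. The identity you need, and did not list, is $\widehat\Phi\widehat A^\pi=\widehat\Phi$, which is equivalent to $\widehat\Phi\widehat A\widehat A^e=O$. It gives $\widehat\Phi^\pi\widehat A^\pi=\widehat A^\pi-\widehat\Phi^e$, $\widehat\Phi^D\widehat A^\pi=\widehat\Phi^D$ and $\widehat\Phi^D\widehat A=\widehat A\widehat A^\pi\widehat\Phi^D$.
\begin{itemize}
\item The $(1,2)$ block of $(\widehat K\widehat H)^{2D}$ is $\widehat X_{11}\widehat X_{12}+\widehat X_{12}\widehat X_{22}$.
\item The $i=0$ term of $\widehat X_{11}\widehat X_{12}$ is $\widehat A^D\widehat\Phi^\pi\widehat A^\pi\widehat\Phi^e=\widehat A^D(\widehat A^\pi-\widehat\Phi^e)\widehat\Phi^e=-\widehat A^D\widehat\Phi^e$. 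It does not collapse, contrary to your heuristic that such products die against $\widehat\Phi^e\widehat A^D=O$.
\item In $\widehat Y_{11}\widehat A+\widehat Y_{12}$ (with $\widehat Y=(\widehat K\widehat H)^{2D}$), the terms $\widehat A^\pi\widehat\Phi^D\widehat A$ and $-\widehat A\widehat A^\pi\widehat\Phi^D$ cancel.
\end{itemize}
What your method actually yields for the $(1,1)$ block is
\[
\sum_{i=0}^{i_{\widehat B\widehat C}-1}\widehat A^{(2i+2)D}\widehat\Phi^\pi\widehat\Phi^{i}\widehat A-\widehat A^D\widehat\Phi^e+\sum_{i=0}^{i_{\widehat B\widehat C}-1}\widehat A^{(2i+3)D}\widehat\Phi^\pi\widehat\Phi^{i+1}.
\]
The other three blocks come out exactly as stated. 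This is consistent with the $(2,1)$ block, which already carries the matching $-\widehat C\widehat A^{2D}\widehat\Phi^e$.

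\textbf{Counterexample.} Take all infinitesimal parts zero, with $A=\mathrm{diag}(1,0)$, $B=I_2$ and $C=\begin{pmatrix}0&1\\0&4\end{pmatrix}$.
\begin{itemize}
\item The hypotheses hold: $AA^\pi=O$, and $BCAA^e=C\,\mathrm{diag}(1,0)=O$.
\item Both indices equal $1$, with $(BC)^e=\tfrac14 C$ and $(BC)^\pi=\begin{pmatrix}1&-1/4\\0&0\end{pmatrix}$.
\end{itemize}
The stated formula then produces
\[
F=\begin{pmatrix}1&\tfrac14&1&-\tfrac14\\0&0&0&\tfrac14\\0&\tfrac14&0&0\\0&1&0&0\end{pmatrix},\qquad
M=\begin{pmatrix}1&0&1&0\\0&0&0&1\\0&1&0&0\\0&4&0&0\end{pmatrix}.
\]
Here $(MF)_{12}=\tfrac12\neq 0=(FM)_{12}$, so $F\neq M^D$. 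The true $M^D=M^{\#}$ is $F$ with its $(1,2)$ entry replaced by $-\tfrac14$, exactly as the corrected formula predicts.

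Your fallback of checking the three defining equations would therefore expose the error rather than prove the statement. What your argument establishes is the corrected theorem with $-\widehat A^D(\widehat B\widehat C)^e$ in the $(1,1)$ block.
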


\section{Applications to Certain Graph Classes}
In this section, we introduce dual-number-weighted double star digraphs, dual-number-weighted D-linked star digraphs, and dual-number-weighted Dutch windmill digraphs. Then we investigate their structures and the associated adjacency matrices, derive representations of the dual Drazin inverse, and generalize several existing results in the literature. 

\begin{definition}
Let $DG^w=(V, E, w)$ be a connected dual-number-weighted digraph, where $V=(v_1,...,v_n)$ is the vertex
set, the arc set is defined by
\[
E=\{\, v_{ij}\mid v_{ij} \text{ is the directed arc from } v_i \text{ to } v_j \,\},
\]
and $w:E\to\mathbb{DC}$ assigns a dual-number weight to each arc.
\end{definition}
The adjacency matrix of $DG^w$ is defined as $\widehat{A}=(\widehat{a}_{ij})$, where
\[
\widehat{a}_{ij}=
\begin{cases}
w(v_{ij}), & \text{if } v_{ij}\in E,\\
0, & \text{otherwise},
\end{cases}
\]
 and where we also assume that  $\widehat{A}\in\mathbb{DC}_z^{n\times n}.$

\subsection{Dual-Number-Weighted Double Star Digraphs (DN-DS digraphs)}
The double star digraph $S_{m,n}$ is constructed from two star digraphs
$K_{1,m-1}$ and $K_{1,n-1}$ by joining their central vertices with two
directed edges in opposite directions. The resulting digraph preserves
the star structure on both sides while introducing a bidirectional
connection between the two centers. We illustrate the structure of the double star digraph in Figure~\ref{double star digraph}.
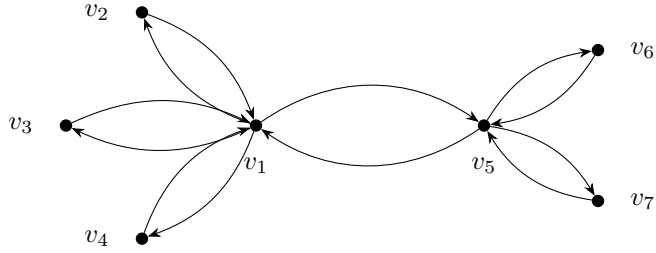
\begin{figure}[htbp]
\centering
\begin{tikzpicture}[
   vertex/.style={circle, draw, fill=black, inner sep=1.5pt},
  every label/.style={font=\small},
  label distance=6pt,
  >=Stealth
]

\node[vertex,label=below:$v_1$] (v1) at (-1.5,0) {};
\node[vertex,label=below:$v_5$] (v2) at ( 1.5,0) {};

\node[vertex,label=left:$v_2$] (v3) at (-3, 1.5) {};
\node[vertex,label=left:$v_3$] (v4) at (-4, 0) {};
\node[vertex,label=left:$v_4$] (v5) at (-3,-1.5) {};

\node[vertex,label=right:$v_6$] (v6) at ( 3, 1) {};
\node[vertex,label=right:$v_7$] (v7) at ( 3,-1) {};


\draw[->, bend left=35]  (v1) to (v2);   
\draw[->, bend right=-35] (v2) to (v1);   

\foreach \i in {3,4,5}{
  \draw[->, bend left=25]  (v1) to (v\i); 
  \draw[->, bend right=-25] (v\i) to (v1); 
}

\foreach \i in {6,7}{
  \draw[->, bend left=25]  (v2) to (v\i); 
  \draw[->, bend right=-25] (v\i) to (v2); 
}

\end{tikzpicture}
\caption{Double star digraph $S_{4,3}$.}
\label{double star digraph}
\end{figure}

We consider the double star digraph $S_{m,n}$ with dual-number weights assigned to its arcs, referred to as the dual-number weighted double star digraph, abbreviated as DN-DS digraph, and denoted by $DWS_{m,n}$.  The corresponding adjacency matrix $A(DWS_{m,n})$ is therefore a matrix over $\mathbb{DC}_z$. 

Let $\widehat{x},\widehat{y}\in\mathbb{DC}_z^m$ and $\widehat{\omega},\widehat{v}\in\mathbb{DC}_z^n$ be nonzero vectors, and let $\widehat{a},\widehat{b}\in\mathbb{D}\setminus\{0\}$. Then any matrix whose DN-DS digraph is $WS_{(m+1),(n+1)}$ is permutation similar to $\widehat{M}$ of the following block structure:
\begin{align}\label{adj of wdsd}
\widehat{M}
=
\left(
\begin{array}{ccc: c}
0 & \widehat{x}^{T} & \widehat{a} & 0 \\
\widehat{y} & 0 & 0 & 0 \\ 
\widehat{b} & 0 & 0 & \widehat{\omega}^{T} \\\hdashline
0 & 0 & \widehat{v} & 0
\end{array}
\right)=\begin{pmatrix}
    \widehat{A}&\widehat{B}
    \\\widehat{C}&O
\end{pmatrix}\in\mathbb{DC}_z^{(m+n+2)\times (m+n+2)},
\end{align}
where $\widehat{A}=A+\varepsilon A_0$,
$\widehat{B}=B+\varepsilon B_0$, and 
$\widehat{C}=C+\varepsilon C_0$
are dual complex matrices.

Observe that the group inverse of the adjacency matrix of the double star digraph was obtained in \cite{MNSEJAA2022}, and its Drazin inverse subject to $x^Ty=w^Tv=0$ was studied in \cite{AMPMJM2026}. We proceed by weakening the assumptions in \cite{AMPMJM2026} and by lifting the problem from $\mathbb{C}$ to the dual complex algebra $\mathbb{DC}_z$. This leads to explicit dual Drazin inverse representation for DN-DS digraphs.

Subsequently, we present the following theorem.

\begin{theorem}
Let $\widehat{M}$ be a dual complex matrix given by \eqref{adj of wdsd}, associated with $DWS_{(m+1), (n+1)}$. Assume that $\omega^{\,T}{v}+\varepsilon(w^Tv_0+w_0^Tv)=0$ and $\operatorname{Ind_s}(\widehat{A})=k$, then 
\begin{align*}
\widehat{M}^{D}
&=
\begin{pmatrix}
0 & \theta^{D} x^{T} & \theta^{D} a & 0 \\
y \theta^{D} & 0 & 0 & y \theta^{2D} a \omega^{T} \\
b \theta^{D} & 0 & 0 & b \theta^{2D} a \omega^{T} \\
0 & v b \theta^{2D} x^{T} & v b \theta^{2D} a & 0
\end{pmatrix}
+ \varepsilon
\begin{pmatrix}
0 & \theta^{D} x_0^{T} + \theta_R x^{T}
& \theta^{D} a_0 + \theta_R a & 0 \\[2mm]
y \theta_R + y_0 \theta^{D} & 0 & 0 & M_0^{24} \\[2mm]
b \theta_R + b_0 \theta^{D} & 0 & 0 & M_0^{34} \\[2mm]
0 & M_0^{42} & M_0^{43} & 0
\end{pmatrix},
\end{align*}
where
\[
\begin{aligned}
M_0^{24}
&= y \theta^{2D} a \omega_0^{T}
+ y \theta^{2D} a_0 \omega^{T}
+ y \theta^{D}\theta_R a \omega^{T}
+ y\theta_R \theta^{D} a \omega^{T}
+ y_0 \theta^{2D} a \omega^{T}, \\[1mm]
M_0^{34}
&= b \theta^{2D} a \omega_0^{T}
+ b \theta^{2D} a_0 \omega^{T}
+ b \theta^{D} \theta_{R} a \omega^{T}
+ b \theta_{R}\theta^{D} a \omega^{T}
+ b_0 \theta^{2D} a \omega^{T}, \\[1mm]
M_0^{42}
&= v b \theta^{2D} x_0^{T}
+ v b \theta^{D} \theta_Rx^{T}
+ v b \theta_R\theta^{D} x^{T}
+ v b_0 \theta^{2D} x^{T}
+ v_0 b \theta^{2D} x^{T}, \\[1mm]
M_0^{43}
&= v b \theta^{2D} a_0
+ v b \theta^{D}\theta_R a
+ v b \theta_R\theta^{D} a
+ v b_0 \theta^{2D} a
+ v_0 b \theta^{2D} a.
\end{aligned}
\]
\end{theorem}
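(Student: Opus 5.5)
The plan is to obtain the formula as a special case of Theorem \ref{DecompABCO1}, with the block partition indicated in \eqref{adj of wdsd}, and then to expand everything into standard and infinitesimal parts. Write
\[
\widehat A=\begin{pmatrix}0&\widehat u^T\\ \widehat z&O\end{pmatrix},\qquad
\widehat u=\begin{pmatrix}\widehat x\\ \widehat a\end{pmatrix},\quad
\widehat z=\begin{pmatrix}\widehat y\\ \widehat b\end{pmatrix},
\]
so that $\widehat B$ is the $(m+2)\times n$ matrix whose only nonzero row is the last one, equal to $\widehat\omega^T$, and $\widehat C$ is the $n\times(m+2)$ matrix whose only nonzero column is the last one, equal to $\widehat v$. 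Set $\widehat\theta=\widehat u^T\widehat z=\widehat x^T\widehat y+\widehat a\widehat b$. This is a dual scalar, and I read $\theta$ and $\theta_R$ in the statement as the standard and infinitesimal parts of $\widehat\theta^D=\theta^D+\varepsilon\theta_R$, i.e.\ $\theta^D=\theta^{-1}$ or $0$. If $\theta=0$, existence of $\widehat\theta^D$ forces $\theta_0=0$.

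\textbf{Step 1: $\widehat B\widehat C=O$.} The product $\widehat B\widehat C$ has a single possibly nonzero entry, in position $(m+2,m+2)$. That entry equals $\widehat\omega^T\widehat v=\omega^Tv+\varepsilon(\omega^Tv_0+\omega_0^Tv)$, which vanishes by hypothesis. Hence $\widehat B\widehat C=O$, so the commutativity condition and the condition $\widehat A\widehat A^e\widehat B\widehat C=O$ of Theorem \ref{DecompABCO1} hold trivially. Moreover $(\widehat B\widehat C)^D=O$, $(\widehat B\widehat C)^\pi=I$ and $i_{\widehat B\widehat C}=1$, so every sum in \eqref{E_i} reduces to its $i=0$ term. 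This gives
\[
\widehat M^D=\begin{pmatrix}\widehat A^D&\widehat A^{2D}\widehat B\\ \widehat C\widehat A^{2D}&\widehat C\widehat A^{3D}\widehat B\end{pmatrix}.
\]
In particular the value $k=\Ind_s(\widehat A)$ plays no role in the final formula.

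\textbf{Step 2: compute $\widehat A^D$.} Since $\widehat A$ is itself bipartite anti-diagonal, Corollary \ref{OBCO} gives
\[
\widehat A^D=\begin{pmatrix}0&\widehat\theta^D\widehat u^T\\ \widehat z\widehat\theta^D&O\end{pmatrix}.
\]
Squaring and using the scalar identity $\widehat\theta\,\widehat\theta^{2D}=\widehat\theta^D$ yields
\[
\widehat A^{2D}=\operatorname{diag}\bigl(\widehat\theta^D,\ \widehat z\,\widehat\theta^{2D}\widehat u^T\bigr),
\]
and $\widehat A^{3D}=\widehat A^D\widehat A^{2D}$ is again anti-diagonal.

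\textbf{Step 3: read off the blocks.} Because $\widehat B$ lives only in the last row (the $\widehat b$-row) and $\widehat C$ only in the last column:
\begin{itemize}
\item $\widehat A^{2D}\widehat B=\begin{pmatrix}0\\ \widehat z\widehat\theta^{2D}\widehat a\,\widehat\omega^T\end{pmatrix}$, giving the entries $y\theta^{2D}a\omega^T$ and $b\theta^{2D}a\omega^T$;
\item $\widehat C\widehat A^{2D}=\bigl(0,\ \widehat v\,\widehat b\,\widehat\theta^{2D}\widehat u^T\bigr)$, giving $vb\theta^{2D}x^T$ and $vb\theta^{2D}a$;
\item $\widehat C\widehat A^{3D}\widehat B$ involves the $(m+2,m+2)$ entry of the anti-diagonal matrix $\widehat A^{3D}$, which is $0$, so this block vanishes.
\end{itemize}
This reproduces the standard-part matrix in the statement.

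\textbf{Step 4: expand into dual parts.} Substitute $\widehat x=x+\varepsilon x_0$ (and similarly for the other vectors and scalars) together with $\widehat\theta^D=\theta^D+\varepsilon\theta_R$ and
\[
\widehat\theta^{2D}=\theta^{2D}+\varepsilon(\theta^D\theta_R+\theta_R\theta^D),
\]
then collect the coefficients of $\varepsilon$ using $\varepsilon^2=0$. Each block is a product of at most five dual factors, so the product rule gives exactly the five-term expressions $M_0^{24}$, $M_0^{34}$, $M_0^{42}$, $M_0^{43}$, and two-term expressions in the other nonzero blocks.

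\textbf{Main obstacle.} The delicate point is Step 1 together with the applicability of Corollary \ref{OBCO}. One must check that $\widehat\theta$ is dual Drazin invertible, i.e.\ that membership of $\widehat M$ in $\mathbb{DC}_z$ excludes the case $\theta=0\neq\theta_0$, so that $\widehat\theta$ and hence $\widehat A$ lie in $\mathbb{DC}_z$. The rest is bookkeeping of the $\varepsilon$-terms.
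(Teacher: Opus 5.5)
Your proposal is correct and is essentially the paper's own proof: $\widehat\omega^T\widehat v=0$ gives $\widehat B\widehat C=O$, and Theorem \ref{DecompABCO1} then collapses to $\widehat M^D=\begin{pmatrix}\widehat A^D&\widehat A^{2D}\widehat B\\ \widehat C\widehat A^{2D}&\widehat C\widehat A^{3D}\widehat B\end{pmatrix}$; Corollary \ref{OBCO} with $\widehat\theta=\widehat x^T\widehat y+\widehat a\widehat b$ gives $\widehat A^D$, and the $\varepsilon$-parts are read off. The existence issue you flag is also left implicit in the paper, which simply writes $\widehat\theta^D=(\widehat x^T\widehat y+\widehat a\widehat b)^{-1}$ and relies on the standing hypothesis of Theorem \ref{DecompABCO1} that $\widehat A$ is dual Drazin invertible; what is actually needed is that hypothesis on $\widehat A$ itself, because with infinitesimal weights $\widehat\theta\in\mathbb{DC}_z$ alone does not force $\widehat A\in\mathbb{DC}_z$, so your ``hence'' in the last paragraph is too quick.
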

\begin{proof}
Note that $\omega^{\,T}{v}+\varepsilon(w^Tv_0+w_0^Tv)=0$, i.e., $\widehat{B}\widehat{C}=O$. By Theorem \ref{DecompABCO1}, we obtain
\begin{align}\label{PFM^D}
    \widehat{M}^D=\begin{pmatrix}
        \widehat{A}^D&\widehat{A}^{2D}\widehat{B}\\
        \widehat{C}\widehat{A}^{2D}
        &\widehat{C}\widehat{A}^{3D}\widehat{B}
    \end{pmatrix}
\end{align}
Moreover, let $\widehat{\theta}=\widehat{x}^{\,T}\widehat{y}+\widehat{a}\widehat{b}=\theta+\varepsilon \theta_0$. Then apply Corollary \ref{OBCO} and \eqref{adj of wdsd}, we have
\begin{align}\label{DSDA^D}
    \widehat{A}^D=\begin{pmatrix}
        0&\widehat{\theta}^D\widehat{x}^T&\widehat{\theta}^D\widehat{a}\\
        \widehat{y}\widehat{\theta}^D&0&0\\
        \widehat{b}\widehat{\theta}^D&0&0\\
        \end{pmatrix}.
\end{align}
From the properties of the dual matrix, we note that
\[
\left\{
\begin{aligned}
    &\widehat{\theta}^D
    =\theta^D+\varepsilon \theta_R=(\widehat{x}^T\widehat{y}+\widehat{a}\widehat{b})^{-1};\\
    &\theta_0=x^Ty_0+x_0^Ty+ab_0+a_0b;\\
    &\theta_R=-\theta^D\theta_0\theta^D
    +\sum_{i=0}^{k-1}(\theta^D)^{i+2}\theta_0\theta^i(I_n-\theta\theta^D)+\sum_{i=0}^{k-1}(I_n-\theta\theta^D)\theta^i\theta_0(\theta^D)^{i+2}.
\end{aligned}
\right.
\] 
Substituting \eqref{DSDA^D} and the above expressions into \eqref{PFM^D} completes the proof.
\end{proof}

\subsection{Dual-Number-Weighted D-Linked Stars Digraphs (DN-DLS digraphs)}
We consider a class of structured digraphs constructed from directed star components and a base digraph. 
Let $D$ be a digraph with vertex set $\{v_1,\ldots,v_n\}$, and for each $i$ let 
$K_{1,r_i}$ denote a directed star with $r_i$ pendant vertices. 
A $D$-linked stars digraph is formed by assembling the stars 
$K_{1,r_1},\ldots,K_{1,r_n}$ and introducing interconnections between their center vertices according to the adjacency relations in $D$. Specifically, whenever $(v_i,v_j)$ is an edge of $D$, a directed edge is added from the center of $K_{1,r_i}$ to the center of $K_{1,r_j}$. 
This construction produces a composite digraph whose global structure is governed by the base digraph $D$, while the local structure is determined by the individual 
star components.  The $D$-linked stars digraph, denoted by $gls(D, r_1,r_2,...,r_n)$, is shown in Figure~\ref{DN-DLS}.

\begin{figure}[htbp]
\centering
\begin{tikzpicture}[
  vertex/.style={circle, draw, fill=black, inner sep=1.5pt},
  >=Stealth,
  every label/.style={font=\small},
  label distance=6pt
]

\node[vertex,label=below:$v_1$] (c1) at (-1.5,-2) {};
\node[vertex,label=below:$v_2$] (c2) at (0,0) {};
\node[vertex,label=below:$v_3$] (c3) at (1.5,-2) {};

\node[vertex,label=left:$v_4$] (v4) at (-3, -1.2) {};
\node[vertex,label=left:$v_5$] (v5) at (-3, -2.8) {};

\node[vertex,label=above:$v_6$] (v6) at (-1.5, 1.2) {};
\node[vertex,label=above:$v_7$] (v7) at ( 0.0, 1.6) {};
\node[vertex,label=above:$v_8$] (v8) at ( 1.5, 1.2) {};

\node[vertex,label=right:$v_9$]  (v9)  at (3, -1.2) {};
\node[vertex,label=right:$v_{10}$] (v10) at (3,-2.8) {};


\draw[->, bend left=25]  (c1) to (v4);
\draw[->, bend right=-25] (v4) to (c1);
\draw[->, bend left=25]  (c1) to (v5);
\draw[->, bend right=-25] (v5) to (c1);

\draw[->, bend left=20]  (c2) to (v6);
\draw[->, bend right=-20] (v6) to (c2);
\draw[->, bend left=20]  (c2) to (v7);
\draw[->, bend right=-20] (v7) to (c2);
\draw[->, bend left=20]  (c2) to (v8);
\draw[->, bend right=-20] (v8) to (c2);

\draw[->, bend left=25]  (c3) to (v9);
\draw[->, bend right=-25] (v9) to (c3);
\draw[->, bend left=25]  (c3) to (v10);
\draw[->, bend right=-25] (v10) to (c3);


\draw[->, bend left=20]  (c1) to (c2);
\draw[->, bend right=-20] (c2) to (c1);

\draw[->, bend left=20]  (c2) to (c3);
\draw[->, bend right=-20] (c3) to (c2);

\draw[->, bend left=20]  (c1) to (c3);
\draw[->, bend right=-20] (c3) to (c1);

\end{tikzpicture}
\caption{$D$-linked stars digraph $\mathrm{gls}(D;2,3,2)$.}
\label{DN-DLS}
\end{figure}
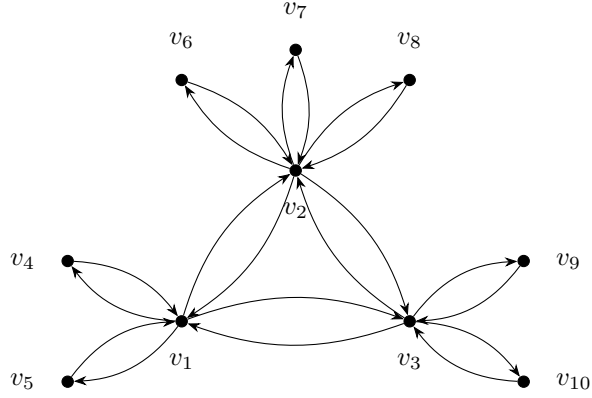

Such digraphs naturally extend double star digraphs and may be viewed as a 
particular type of composite or cluster-based network \cite{ZYLDAM2009} in which star subgraphs 
serve as building blocks. This modular construction is convenient for analyzing 
structural properties of associated matrices, including generalized inverses.

To incorporate algebraic weights, we assign dual-number weights to the edges of 
the digraph. The resulting weighted digraph is called a 
dual-number-weighted $D$-linked stars digraph (DN-DLS digraph), denoted by $DWgls(D, r_1,r_2,...,r_n)$. 
In this setting, the adjacency matrix becomes a dual-number matrix whose block 
structure reflects both the star components and the linking pattern induced by $D$.

Let $\widehat{x_i},\widehat{y_i} \in \mathbb{DC}_z^{r_i}$ be nonzero vectors, 
$r_i \in \mathbb{N}$ for $i=1,\ldots,n$, and let 
$\widehat{A} \in \mathbb{DC}_z^{n \times n}$. 
Then any adjacency matrix associated with the DN-DLS digraph 
$DWgls(D, r_1, \ldots, r_n)$ is permutation similar to $\widehat{M}$ of the following block form:
\begin{align}\label{DN-DLSM}
    \widehat{M}=\begin{pmatrix}
        \widehat{A}&\widehat{B}\\\widehat{C}&O
    \end{pmatrix}=\begin{pmatrix}
        A+\varepsilon A_0&B+\varepsilon B_0\\C+\varepsilon C_0&O
    \end{pmatrix}\in\mathbb{DC}_z^{\big(n+\sum\limits_{i=1}^{n}r_i\big)\times \big(n+\sum\limits_{i=1}^{n}r_i\big)},
\end{align}
where $\widehat{A}=A+\varepsilon A_0$,
\begin{align*}
&\widehat{B}=B+\varepsilon B_0=diag(\widehat{x_1}^T,...,\widehat{x_n}^T)=diag(x_1^T+\varepsilon \widetilde{x}_1^T,...,
x_n^T+\varepsilon \widetilde{x}_n^T),\\ &\widehat{C}=C+\varepsilon C_0=diag(\widehat{y_1},...,\widehat{y_n})=diag(y_1+\varepsilon \widetilde{y}_1,...,y_n+\varepsilon \widetilde{y}_n).
\end{align*}

It was shown in \cite{MNSEJAA2022} that the matrix $\widehat{M}$ is singular 
and its group inverse can be computed explicitly. 
However, Ara\'{u}jo et al. \cite{AMPMJM2026} determined the Drazin index only in the case $BC=0$, and consequently raised the problem of whether $M^d$ admits a constructive expression in terms of $A^D$ and the off-diagonal blocks $B$ and $C$ under the condition $BC=0$. 

To address this problem, we derive a representation of the Drazin inverse of the adjacency matrix of the DN-DLS digraph, extending the corresponding results to $\mathbb{DC}_z$.

\begin{theorem}
Let $\widehat{M}$ be a dual complex matrix given by \eqref{DN-DLSM}, associated with a DN-DLS digraph
$DWgls(D,r_1,r_2,...,r_n)$. If $x_i^Ty_i+\varepsilon(x_i^T\widetilde{y}_i+\widetilde{x}_i^Ty_i)=0$ and $\operatorname{Ind_s}(\widehat{A})=k$, then $\widehat{M}^{D}
= M^{D} + \varepsilon M_0$
\begin{align*}
    &=
\begin{pmatrix}
A^{D} & A^{2D}B \\
CA^{2D} & CA^{3D}B
\end{pmatrix}
+
\varepsilon
\begin{pmatrix}
A_{R}
&
A^{2D}B_{0}+A^{D}A_{R}B+A_{R}A^{D}B
\\[6pt]
CA^{D}A_{R}+CA_{R}A^{D}+C_{0}A^{2D}
&
CA^{3D}B_{0}+CA^{2D}A_{R}B+C_{0}A^{3D}B\\ &+CA^{D}A_{R}A^{D}B+CA_{R}A^{2D}B
\end{pmatrix}.
\end{align*}
\end{theorem}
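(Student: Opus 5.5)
The plan mirrors the proof for DN-DS digraphs: reduce to the case $\widehat B\widehat C=O$ in Theorem \ref{DecompABCO1}, then expand the standard and infinitesimal parts.

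\textbf{Step 1: show $\widehat B\widehat C=O$.} Since $\widehat B=\operatorname{diag}(\widehat{x_1}^T,\dots,\widehat{x_n}^T)$ and $\widehat C=\operatorname{diag}(\widehat{y_1},\dots,\widehat{y_n})$, the product is the $n\times n$ diagonal matrix with entries
\[
\widehat{x_i}^T\widehat{y_i}=x_i^Ty_i+\varepsilon(x_i^T\widetilde{y}_i+\widetilde{x}_i^Ty_i),
\]
using $\varepsilon^2=0$. By hypothesis each entry vanishes, so $\widehat B\widehat C=O$.

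\textbf{Step 2: apply Theorem \ref{DecompABCO1}.} With $\widehat B\widehat C=O$, the hypotheses $\widehat A\widehat A^\pi\widehat B\widehat C=\widehat B\widehat C\widehat A\widehat A^\pi$ and $\widehat A\widehat A^e\widehat B\widehat C=O$ hold trivially. Moreover $(\widehat B\widehat C)^D=O$, $(\widehat B\widehat C)^\pi=I$, and $\Ind_s(\widehat B\widehat C)=1$, so only the $i=0$ terms survive in \eqref{E_i}. This gives
\[
\widehat M^D=\begin{pmatrix}\widehat A^D&\widehat A^{2D}\widehat B\\ \widehat C\widehat A^{2D}&\widehat C\widehat A^{3D}\widehat B\end{pmatrix},
\]
which is the same intermediate formula \eqref{PFM^D} obtained for DN-DS digraphs.

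\textbf{Step 3: expand into standard and infinitesimal parts.} Write $\widehat A^D=A^D+\varepsilon A_R$, with $A_R$ given by the formula of \cite{ZZFil2023} recalled in Section 2. Then
\[
\widehat A^{2D}=A^{2D}+\varepsilon(A^DA_R+A_RA^D),\qquad \widehat A^{3D}=A^{3D}+\varepsilon(A^{2D}A_R+A^DA_RA^D+A_RA^{2D}).
\]
These are the power formula $(\widehat A^D)^k=A^{kD}+\varepsilon\sum_{i}A^{(i)D}A_RA^{(k-1-i)D}$, written out for $k=2,3$.

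Multiplying by $\widehat B=B+\varepsilon B_0$ and $\widehat C=C+\varepsilon C_0$ and discarding $\varepsilon^2$ terms, block by block:
\begin{itemize}
\item The $(1,2)$ block is $A^{2D}B+\varepsilon(A^{2D}B_0+A^DA_RB+A_RA^DB)$.
\item The $(2,1)$ block is $CA^{2D}+\varepsilon(CA^DA_R+CA_RA^D+C_0A^{2D})$.
\item The $(2,2)$ block is $CA^{3D}B$ plus $\varepsilon$ times the five terms $CA^{3D}B_0$, $C_0A^{3D}B$, $CA^{2D}A_RB$, $CA^DA_RA^DB$ and $CA_RA^{2D}B$.
\end{itemize}
These match the stated matrix.

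\textbf{Main obstacle.} The only delicate point is justifying that Theorem \ref{DecompABCO1} applies with the degenerate choice $\widehat B\widehat C=O$: its Drazin inverse is $O$ and its index is taken to be $1$, so the sums truncate to the $i=0$ term. I would record this explicitly.

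Alternatively, I could verify the formula for $\widehat M^D$ directly against the three defining equations \eqref{Def Dual Drazin inverse}, using $\widehat B\widehat C=O$. For example, $\widehat M^2=\begin{pmatrix}\widehat A^2&\widehat A\widehat B\\ \widehat C\widehat A&\widehat C\widehat B\end{pmatrix}$ has no term of the form $\widehat B\widehat C$, which makes this check routine.
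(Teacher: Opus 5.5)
Your proposal is correct and is essentially the paper's proof: the hypothesis is exactly $\widehat B\widehat C=O$, Theorem~\ref{DecompABCO1} then collapses to $\widehat M^D=\begin{pmatrix}\widehat A^D&\widehat A^{2D}\widehat B\\ \widehat C\widehat A^{2D}&\widehat C\widehat A^{3D}\widehat B\end{pmatrix}$, and the stated formula follows by expanding $\widehat A^{2D}$ and $\widehat A^{3D}$ and multiplying by $\widehat B=B+\varepsilon B_0$ and $\widehat C=C+\varepsilon C_0$. Your explicit truncation to the $i=0$ terms fills in a step the paper leaves implicit, and it does not depend on the index convention, since every $i\ge 1$ term contains $(\widehat B\widehat C)^i=O$.
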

\begin{proof}
    Notice that $x_i^Ty_i+\varepsilon(x_i^T\widetilde{y}_i+\widetilde{x}_i^Ty_i)=0$, i.e., $\widehat{B}\widehat{C}=O$. By Theorem \ref{DecompABCO1}, it follows that
    \begin{align}\label{DN-DLSMD}
        \widehat{M}^D=\begin{pmatrix}
\widehat{A}^{D} & \widehat{A}^{2D}\widehat{B} \\
\widehat C\widehat A^{2D} & \widehat C\widehat A^{3D}\widehat B
\end{pmatrix}.
    \end{align}
    A direct computation shows that \[
\widehat{A}^D=A^D+\varepsilon A_R,\qquad
\widehat{A}^{2D}=A^{2D}+\varepsilon (A^DA_R+A_RA^D),
\]
and
\[
\widehat{A}^{3D}
=
A^{3D}
+\varepsilon\bigl(
A^{2D}A_R
+
A^DA_RA^D
+
A_RA^{2D}
\bigr).
\]

Substituting these expressions into \eqref{DN-DLSMD} completes the proof.
\end{proof}

\subsection{Dual-Number-Weighted Dutch Windmill Digraphs (DN-DW digraphs)}
The Dutch windmill graph $D_n^m$ is obtained by identifying one vertex from each of $m$ copies of the cycle graph $C_n$. 
The common vertex is called the hub, and each cycle is referred to as a blade.

More precisely, let $x$ denote the hub vertex. 
For $k=1,\dots,m$, let $D_n^k$ be the $k$-th cycle subgraph with vertex set
\[
\{x, v_{k1}, v_{k2}, \dots, v_{k,n-1}\}.
\]
Then
\[
V(D_n^m)
=
\{x\}\cup
\{v_{kj} : k=1,\dots,m,\; j=1,\dots,n-1\}.
\]

A special case occurs when $n=3$. 
In this case, $D_3^m$ is called the friendship graph, consisting of $m$ triangles sharing a common vertex. An example of the Dutch windmill digraph $D_4^4$ is shown in Figure~\ref{Dutch windmill}.

\begin{figure}[htbp]
\centering
\begin{tikzpicture}[
  vertex/.style={circle, draw, fill=black, inner sep=1.5pt},
  >=Stealth,
  every label/.style={font=\small},
  label distance=6pt
]

\newcommand{\bidiarc}[3]{%
  \draw[->, bend left=#3]  (#1) to (#2); 
  \draw[->, bend right=#3] (#2) to (#1); 
}

\node[vertex,label=below:$v_1$] (v1) at (0,0) {};

\node[vertex,label=left:$v_2$] (v2) at (-1, 1.6) {};
\node[vertex,label=above:$v_3$] (v3) at (0, 2.3) {};
\node[vertex,label=right:$v_4$] (v4) at (1, 1.6) {};

\node[vertex,label=right:$v_5$] (v5) at (1.6, 1) {};
\node[vertex,label=above:$v_6$] (v6) at (2.3, 0) {};
\node[vertex,label=left:$v_7$] (v7) at (1.6, -1) {};

\node[vertex,label=right:$v_8$] (v8) at (1,-1.6) {};
\node[vertex,label=below:$v_9$] (v9) at (0,-2.3) {};
\node[vertex,label=left:$v_{10}$] (v10) at (-1,-1.6) {};

\node[vertex,label=left:$v_{11}$] (v11) at (-1.6,1) {};
\node[vertex,label=below:$v_{12}$] (v12) at (-2.3,0) {};
\node[vertex,label=right:$v_{13}$] (v13) at (-1.6,-1) {};

\draw[->, bend left=15]  (v1) to (v2);
\draw[->, bend right=-15] (v2) to (v1);
\draw[->, bend left=15]  (v1) to (v4);
\draw[->, bend right=-15] (v4) to (v1);
\draw[->, bend right=-15] (v2) to (v3);
\draw[->, bend right=-15] (v3) to (v4);
\draw[->, bend right=-15] (v4) to (v3);
\draw[->, bend right=-15] (v3) to (v2);
\draw[->, bend left=15]  (v1) to (v5);
\draw[->, bend right=-15] (v5) to (v1);
\draw[->, bend left=15]  (v1) to (v7);
\draw[->, bend right=-15] (v7) to (v1);
\draw[->, bend right=-15] (v5) to (v6);
\draw[->, bend right=-15] (v6) to (v7);
\draw[->, bend right=-15] (v7) to (v6);
\draw[->, bend right=-15] (v6) to (v5);
\draw[->, bend left=15]  (v1) to (v8);
\draw[->, bend right=-15] (v8) to (v1);
\draw[->, bend left=15]  (v1) to (v10);
\draw[->, bend right=-15] (v10) to (v1);
\draw[->, bend right=-15] (v8) to (v9);
\draw[->, bend right=-15] (v9) to (v10);
\draw[->, bend right=-15] (v10) to (v9);
\draw[->, bend right=-15] (v9) to (v8);
\draw[->, bend left=15]  (v1) to (v11);
\draw[->, bend right=-15] (v11) to (v1);
\draw[->, bend left=15]  (v1) to (v13);
\draw[->, bend right=-15] (v13) to (v1);
\draw[->, bend left=15] (v11) to (v12);
\draw[->, bend left=15] (v12) to (v13);
\draw[->, bend right=-15] (v13) to (v12);
\draw[->, bend right=-15] (v12) to (v11);
\end{tikzpicture}
\caption{Dutch windmill digraph $D_4^{4}$.}
\label{Dutch windmill}
\end{figure}

It is known that $D_{2n}^m$ is bipartite, whereas $D_{2n+1}^m$ is non-bipartite. 
Consequently, the adjacency matrix of $D_{2n}^m$ is singular, while the adjacency matrix of $D_{2n+1}^m$ is non-singular \cite{Bapat2014}.

Let $A(D_n^m)$ denote the adjacency matrix of $D_n^m$. 
By ordering the vertices so that the hub vertex appears first and the vertices in each blade are grouped together, the adjacency matrix admits a block form reflecting the hub-and-blade decomposition of the graph. 
Moreover, since $D_{2n}^m$ is bipartite, a suitable reordering of the vertices according to the bipartition yields a bipartite block form of the adjacency matrix. 
These two block representations will be exploited in the sequel to derive explicit expressions for generalized inverses of the adjacency matrix. 

In this section, we consider a directed Dutch windmill graph with dual-number edge weights, called a dual-number-weighted Dutch windmill digraph (DN-DW digraph), denoted by $DWD_{2n}^m$. 
Let $\widehat{x_i},\widehat{y_i} \in \mathbb{DC}_z^{2n-1}$ be nonzero vectors, and let $\widehat{D_{2n}}^i \in \mathbb{DC}_z^{(2n-1) \times (2n-1)}$, for $i=1,...,m$. 
Then the adjacency matrix $\widehat{M}$ associated with the DN-DW digraph $DWD_{2n}^m$ has the following block form:
\begin{align}\label{Dutch-windmill}
\widehat{M}=\left(
 \begin{array}{c:ccc c}
0 & \widehat{x_1}^{T} & \widehat{x_2}^T & \cdots &\widehat{x_m}^T\\ \hdashline 
\widehat{y_1} & \widehat{D_{2n}}^1 & 0 & 0 &0\\
\widehat{y_2} & 0 & \widehat{D_{2n}}^2 & 0&0 \\
\vdots & \vdots & \vdots & \ddots&0\\\widehat{y_m}&0&0&0&\widehat{D_{2n}}^m
\end{array}
\right)=\begin{pmatrix}
    O&\widehat{B}
    \\\widehat{C}&\widehat{D}
\end{pmatrix}.
\end{align}
where $\widehat{D}=D+\varepsilon D_0$,
\begin{align*}
&\widehat{B}=B+\varepsilon B_0=\begin{pmatrix}
    \widehat{x_1}^T&\widehat{x_2}^T&\cdots&\widehat{x_m}^T
\end{pmatrix}=\begin{pmatrix}x_1^T+\varepsilon \widetilde{x}_1^T&\cdots&x_m^T+\varepsilon \widetilde{x}_m^T\end{pmatrix},\\ &\widehat{C}=C+\varepsilon C_0=\begin{pmatrix}
    \widehat{y_1}^T&\widehat{y_2}^T&\cdots&\widehat{y_m}^T
\end{pmatrix}=\begin{pmatrix}y_1^T+\varepsilon \widetilde{y}_1^T&\cdots&y_m^T+\varepsilon \widetilde{y}_m^T\end{pmatrix}.
\end{align*}

Meanwhile, the adjacency matrix \eqref{Dutch-windmill} can be transformed into a bipartite block form:
\begin{align}\label{bipartite}
    \widehat{M}'=\begin{pmatrix}
        O&\widehat{E}\\\widehat{F}&O
    \end{pmatrix}=\begin{pmatrix}
        O&E+\varepsilon E_0\\F+\varepsilon F_0&O
    \end{pmatrix}.
\end{align}

Notice that McDonald et al. \cite{MNSEJAA2022} derived the inverse of $A(D_{2n+1}^{m})$ and the group inverse of $A(D_{2n}^m)$ using the bipartite block representation. Furthermore, we derive representations of the dual Drazin inverse corresponding to the 
hub-and-blade form and bipartite block form, respectively.

\begin{theorem}\label{DWThm}
Let $\widehat{M}$ be a dual complex matrix given by \eqref{Dutch-windmill}, associated with a DN-DW digraph 
$DWD_{2n}^m$, $\kappa=2mn-m+1$. If $\widehat{D_{2n}}^s(\widehat{D_{2n}}^s)^e\widehat{y_s}\widehat{x_t}^T=0$ and $\widehat{D_{2n}}^s\widehat{y}_s\widehat{x}_t^T=\widehat{y}_s\widehat{x}_t^T\widehat{D_{2n}}^t(\widehat{D_{2n}}^t)^\pi$, for $s,t\in[1,m]$. Then
\begin{align*}
    \widehat{M}^D=\begin{pmatrix}
        \sum\limits_{i=0}^{i_{\phi}-1}B\phi^\pi \phi^iD^{(2i+3)D}C
        &\sum\limits_{i=0}^{i_{\phi}-1}B\phi^\pi \phi^iD^{(2i+2)D}+B\phi^DD^\pi
        \\-B\phi^{2D}DD^\pi C-B\phi^DD^DC& \\[6pt]\sum\limits_{i=0}^{i_{\phi}-1}\phi^\pi \phi^iD^{(2i+2)D}C+\phi^DD^\pi C
        &\sum\limits_{i=0}^{i_{\phi}-1}\phi^\pi \phi^iD^{(2i+1)D}
    \end{pmatrix}+\varepsilon \begin{pmatrix}
        \xi_1&\xi_2\\\xi_3&\xi_4
    \end{pmatrix},
\end{align*}where $\widehat\phi=\widehat{C}\widehat{B}=\phi+\varepsilon\phi_0$ and $\operatorname{Ind_s}(\widehat{C}\widehat{B})=i_\phi,$
\begin{align*}
    \xi_1&=\sum_{i=0}^{i_{\phi}-1}
    B\phi^\pi \phi^iD^{(2i+3)D}C_0
    +\sum_{i=0}^{i_{\phi}-1}
    \sum_{j=0}^{i-1}B\phi^\pi \phi^iD^{jD}D_RD^{(2i-j+2)D}C-\sum_{i=0}^{i_{\phi}-1}
    B\phi\phi_R\phi^iD^{(2i+3)D}C    \\&+\sum_{i=0}^{i_{\phi}-1}\sum_{j=1}^{i}B\phi^\pi \phi^{i-j}\phi_0\phi^{j-1}D^{(2i+3)D}C
    -\sum_{i=0}^{i_{\phi}-1}
    B\phi_0\phi^D\phi^iD^{(2i+3)D}C
    -\sum_{i=0}^{i_{\phi}-1}
    B_0\phi^\pi\phi^iD^{(2i+3)D}C
    \\&+B\phi^{2D}D^2D_RC+B\phi^{2D}DD_0D^DC-B\phi^{2D}D_0D^\pi C
    -B\phi^{2D}DD^\pi C_0-B\phi^{D}\phi_RDD^\pi C
    \\&+B\phi_R\phi^DDD^\pi C+B_0\phi^{2D}DD^\pi C
    -B\phi^DD^DC_0-B\phi^DD_RC-B\phi_RD^DC-B_0\phi^DD^DC-B_0\phi_RD^DC,
    \\[4pt]
    \xi_2&=\sum_{i=0}^{i_{\phi}-1}
    \sum_{j=0}^{i-1}B\phi^\pi \phi^iD^{jD}D_RD^{(2i-j+1)D}
    +\sum_{i=0}^{i_{\phi}-1}
    \sum_{j=1}^{i}B\phi^\pi \phi^{i-j}\phi_0\phi^{j-1}D^{(2i+2)D}-\sum_{i=0}^{i_{\phi}-1}
    B\phi\phi_R\phi^iD^{(2i+2)D}
    \\&-\sum_{i=0}^{i_{\phi}-1}
    B\phi_0\phi^D\phi^iD^{(2i+2)D}
    -\sum_{i=0}^{i_{\phi}-1}
    B_0\phi^\pi \phi^iD^{(2i+2)D}
    +B\phi^DDD_R+B\phi^DD_0D^D+B_0\phi^DD^\pi
    +B\phi_RD^\pi,
    \\[4pt]
    \xi_3&=\sum_{i=0}^{i_{\phi}-1}
    \phi^\pi \phi^iD^{(2i+2)D}C_0
    +\sum_{i=0}^{i_{\phi}-1}
    \sum_{j=0}^{i-1}\phi^\pi \phi^iD^{jD}
    D_RD^{(2i-j+1)D}C
    +\sum_{i=0}^{i_{\phi}-1}
    \sum_{j=1}^{i}\phi^\pi \phi^{i-j}\phi_0\phi^{j-1}D^{(2i+2)D}C
    \\&-\sum_{i=0}^{i_{\phi}-1}
    \phi\phi_R\phi^iD^{(2i+2)D}C-\sum_{i=0}^{i_{\phi}-1}
    \phi_0\phi^D \phi^iD^{(2i+2)D}C+\phi^DD^\pi C_0
    +\phi_RD^\pi C-\phi^DDD_RC-\phi^DD_0D^DC,
    \\[4pt]
    \xi_4&=\sum_{i=0}^{i_{\phi}-1}
    \sum_{j=0}^{i-1}\phi^\pi \phi^iD^{jD}D_RD^{(2i-j)D}
    +\sum_{i=0}^{i_{\phi}-1}
    \sum_{j=1}^{i}\phi^\pi \phi^{i-j}\phi_0\phi^{j-1}D^{(2i+1)D}
    -\sum_{i=0}^{i_{\phi}-1}
    \phi\phi_R \phi^iD^{(2i+1)D}\\
    &-\sum_{i=0}^{i_{\phi}-1}
    \phi_0\phi^D \phi^iD^{(2i+1)D}.
\end{align*}
\end{theorem}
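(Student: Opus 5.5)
The plan is to reduce the hub-and-blade form \eqref{Dutch-windmill} to the anti-triangular shape handled by Theorem \ref{DecompABCO1}, and then expand everything into standard and infinitesimal parts. Let $\widehat P=\begin{pmatrix} O&I\\ I&O\end{pmatrix}$ be the block permutation that swaps the hub coordinate with the blade coordinates. Then
\[
\widehat P\widehat M\widehat P^{T}=\begin{pmatrix}\widehat D&\widehat C\\ \widehat B&O\end{pmatrix}.
\]
Since the dual Drazin inverse is unique, conjugation by a permutation commutes with taking it, so $\widehat M^D=\widehat P^{T}\begin{pmatrix}\widehat D&\widehat C\\ \widehat B&O\end{pmatrix}^D\widehat P$. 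This is Theorem \ref{DecompABCO1} applied with $(\widehat A,\widehat B,\widehat C)$ replaced by $(\widehat D,\widehat C,\widehat B)$, so the relevant square product is $\widehat\phi=\widehat C\widehat B$, an $m(2n-1)\times m(2n-1)$ matrix whose $(s,t)$ block is $\widehat y_s\widehat x_t^T$.

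Next I check the two hypotheses of Theorem \ref{DecompABCO1}. Because $\widehat D$ is block diagonal, $\widehat D^D$, $\widehat D^e$ and $\widehat D^\pi$ are block diagonal with blocks $(\widehat{D_{2n}}^s)^D$, $(\widehat{D_{2n}}^s)^e$ and $(\widehat{D_{2n}}^s)^\pi$. This follows from Lemma \ref{dualtri} with zero off-diagonal blocks.
\begin{itemize}
\item The condition $\widehat D\widehat D^e\widehat\phi=O$ reads, block by block, $\widehat{D_{2n}}^s(\widehat{D_{2n}}^s)^e\widehat y_s\widehat x_t^T=0$, which is the first assumption.
\item The commutation condition $\widehat D\widehat D^\pi\widehat\phi=\widehat\phi\widehat D\widehat D^\pi$ has $(s,t)$ block $\widehat{D_{2n}}^s(\widehat{D_{2n}}^s)^\pi\widehat y_s\widehat x_t^T=\widehat y_s\widehat x_t^T\widehat{D_{2n}}^t(\widehat{D_{2n}}^t)^\pi$. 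Splitting $(\widehat{D_{2n}}^s)^\pi=I-(\widehat{D_{2n}}^s)^e$ and using the first assumption, the left side becomes $\widehat{D_{2n}}^s\widehat y_s\widehat x_t^T$. So the condition is exactly the second assumption.
\end{itemize}
Theorem \ref{DecompABCO1} then gives the block formula. Undoing the permutation swaps the row and column blocks, which yields the standard-part matrix displayed in the statement: the $(2,2)$ block is $\sum\phi^\pi\phi^iD^{(2i+1)D}$, the $(1,1)$ block is $\sum B\phi^\pi\phi^iD^{(2i+3)D}C-\dots$, and so on. The integer $\kappa$ is only the order of $\widehat M$ and plays no role.

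For the $\varepsilon$-part I expand each dual factor to first order. I use $\widehat\phi=\phi+\varepsilon\phi_0$ with $\phi_0=CB_0+C_0B$, and $\widehat D^D=D^D+\varepsilon D_R$ together with $\widehat\phi^D=\phi^D+\varepsilon\phi_R$ from the formula of \cite{ZZFil2023}. The remaining expansions are:
\begin{itemize}
\item Products of $\widehat D^D$: $\widehat D^{jD}=D^{jD}+\varepsilon\sum D^{lD}D_RD^{(j-1-l)D}$.
\item Powers of $\widehat\phi$: $\widehat\phi^i=\phi^i+\varepsilon\sum_{j=1}^{i}\phi^{i-j}\phi_0\phi^{j-1}$.
\item The spectral idempotent: $\widehat\phi^\pi=\phi^\pi-\varepsilon(\phi_0\phi^D+\phi\phi_R)$.
\item Similarly $\widehat D^\pi=D^\pi-\varepsilon(D_0D^D+DD_R)$.
\end{itemize}
Collecting the $\varepsilon$-coefficient of each summand by the Leibniz rule for $\varepsilon^2=0$ gives $\xi_1,\dots,\xi_4$.

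The main obstacle is this bookkeeping step: matching every infinitesimal term, including the index shifts in the double sums $\sum_j D^{jD}D_RD^{(\cdot-j)D}$, with the stated expressions. The sign and ordering of the terms coming from $\widehat\phi^\pi$ and $\widehat D^\pi$ need particular care. I would organize the work by processing each monomial, e.g.\ $\widehat B\widehat\phi^\pi\widehat\phi^i\widehat D^{(2i+3)D}\widehat C$, factor by factor. I would also use $\phi^\pi\phi^D=0$ and $D^\pi D^D=0$ to discard terms that vanish, and cross-check the standard parts against the bipartite case as a sanity check.
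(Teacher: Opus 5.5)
Your plan is the paper's proof. The paper also conjugates by $\begin{pmatrix}O&I\\ I&O\end{pmatrix}$ and applies Theorem~\ref{DecompABCO1} to $\begin{pmatrix}\widehat D&\widehat C\\ \widehat B&O\end{pmatrix}$ (its ``identification'' $\widehat A=\widehat D$, $\widehat B\leftrightarrow\widehat C$). It then swaps the blocks back and substitutes the same first-order expansions of $\widehat\phi^D$, $\widehat\phi^\pi$, $\widehat\phi^{\,i}$, $\widehat D^{sD}$, $\widehat D^\pi$ that you list.

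Your block-by-block check that the two blade conditions are exactly $\widehat D\widehat D^e\widehat\phi=O$ and $\widehat D\widehat D^\pi\widehat\phi=\widehat\phi\widehat D\widehat D^\pi$ is an improvement: the paper never verifies the hypotheses of Theorem~\ref{DecompABCO1}. Like the paper, you also assume tacitly that $\widehat\phi$ is dual Drazin invertible. That theorem requires it, and it does not hold automatically.

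Your last step, however, cannot be completed as stated. Correct Leibniz bookkeeping does not return the printed $\xi_1,\dots,\xi_4$, because those expressions contain errors.
\begin{itemize}
\item The $\varepsilon$-part of $\widehat D^{sD}$ is $\sum_{l=0}^{s-1}D^{lD}D_RD^{(s-1-l)D}$. So the inner $D_R$-sums must run up to $j=2i+2,\,2i+1,\,2i+1,\,2i$ in $\xi_1,\dots,\xi_4$ respectively, not up to $j=i-1$. Already for $i=0$ the printed $\xi_4$ loses the term $\phi^\pi D_R$, which the paper's own group-inverse corollary keeps in $\eta_4$.
\item In $\xi_2$, expanding $\widehat B\widehat\phi^\pi\widehat\phi^{\,i}\widehat D^{(2i+2)D}$ and $\widehat B\widehat\phi^D\widehat D^\pi$ gives $+B_0\phi^\pi\phi^iD^{(2i+2)D}$ and $-B\phi^D(DD_R+D_0D^D)$. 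These are the opposite of the printed signs, and the change affects the result even in small examples.
\item $\xi_1$ contains terms, such as $-B_0\phi_RD^DC$ (a product of two infinitesimal parts), that the first-order expansion never produces.
\end{itemize}
So your argument proves the standard part exactly as stated, together with a corrected $\varepsilon$-part. The paper's proof passes over the same discrepancy with the phrase ``simplifying term by term''.
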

\begin{proof}
    Let 
\[
P=\begin{pmatrix}O&I\\ I&O\end{pmatrix},
\]
which is a permutation matrix and hence orthogonal. In particular, $P^{-1}=P$.
A direct computation shows that
\[
P\begin{pmatrix}\widehat{A}&\widehat{B}\\ \widehat{C}&O\end{pmatrix}P
=
\begin{pmatrix}O&\widehat{C}\\ \widehat{B}&\widehat{A}\end{pmatrix}.
\]
Since the Drazin inverse is similarity invariant, i.e.,
\[
(SXS^{-1})^D = S X^D S^{-1}\quad \text{for any nonsingular } S,
\]
we obtain (taking $S=P$ and using $P^{-1}=P$) that
\begin{equation}\label{eq:perm-sim-Drazin}
\begin{pmatrix}0&\widehat{C}\\ \widehat{B}&\widehat{A}\end{pmatrix}^D
=
P\begin{pmatrix}\widehat{A}&\widehat{B}\\ \widehat{C}&O\end{pmatrix}^D P.
\end{equation}
By Theorem~\ref{DecompABCO1},
\[
\begin{pmatrix}\widehat{A}&\widehat{B}\\ \widehat{C}&O\end{pmatrix}^D
=
\begin{pmatrix}
\widehat{E}_1 & \widehat{E}_2\\
\widehat{E}_3 & \widehat{E}_4
\end{pmatrix},
\]
where $\widehat{E}_i$ $(i=1,2,3,4)$ are given in \eqref{E_i}. Substituting this into
\eqref{eq:perm-sim-Drazin} yields
\[
\begin{pmatrix}O&\widehat{C}\\ \widehat{B}&\widehat{A}\end{pmatrix}^D
=
\begin{pmatrix}
\widehat{E}_4 & \widehat{E}_3\\
\widehat{E}_2 & \widehat{E}_1
\end{pmatrix}.
\]
To be consistent with the adjacency-matrix ordering in \eqref{Dutch-windmill}, we identify
\[
\widehat{A}=\widehat{D},\qquad \widehat{C}=\widehat{B},\qquad \widehat{B}=\widehat{C},
\]
which corresponds to a relabeling induced by the vertex permutation used in \eqref{Dutch-windmill}.
With this identification, the preceding block formula gives
\begin{align*}
\widehat{M}^D=\begin{pmatrix}
\displaystyle \sum_{i=0}^{i_{\widehat{C}\widehat{B}}-1}
\widehat{B}(\widehat{C}\widehat{B})^\pi(\widehat{C}\widehat{B})^{i}\widehat{D}^{(2i+3)d}\widehat{C}
&
\displaystyle \sum_{i=0}^{i_{\widehat{C}\widehat{B}}-1}
\widehat{B}(\widehat{C}\widehat{B})^\pi(\widehat{C}\widehat{B})^{i}\widehat{D}^{(2i+2)d}+\widehat{B}(\widehat{C}\widehat{B})^{d}\widehat{D}^{\pi}\\-\widehat{B}(\widehat{C}\widehat{B})^{2d}\widehat{D}\widehat{D}^\pi \widehat{C}
-\widehat{B}(\widehat{C}\widehat{B})^{d}\widehat{D}^{d}\widehat{C}&
\\[8pt]
\displaystyle \sum_{i=0}^{i_{\widehat{C}\widehat{B}}-1}
(\widehat{C}\widehat{B})^\pi(\widehat{C}\widehat{B})^{i}\widehat{D}^{(2i+2)d}\widehat{C}
+(\widehat{C}\widehat{B})^{d}\widehat{D}^{\pi}\widehat{C}
&
\displaystyle \sum_{i=0}^{i_{\widehat{C}\widehat{B}}-1}
(\widehat{C}\widehat{B})^\pi(\widehat{C}\widehat{B})^{i}\widehat{D}^{(2i+1)d}
\end{pmatrix}.
\end{align*}

Let
\[
\widehat{\phi}=\widehat{C}\widehat{B}
=CB+\varepsilon(CB_0+C_0B)=\phi+\varepsilon\phi_0.
\]
Then the dual-number formulas imply
\[
\widehat{\phi}^{D}=\phi^{D}+\varepsilon\phi_{R},
\qquad
\widehat{\phi}^{\pi}
=\phi^{\pi}-\varepsilon\bigl(\phi\phi_{R}+\phi_{0}\phi^{D}\bigr).
\]
Moreover, for $i\ge 1$, induction yields
\[
(\widehat{D}^{D})^{i}
=
D^{iD}
+\varepsilon\sum_{j=0}^{i-1}D^{jD}D_{R}D^{(i-j-1)D},
\qquad
\widehat{D}^{\,i}
=
D^{i}
+\varepsilon\sum_{j=1}^{i}D^{i-j}D_{0}D^{j-1}.
\]
Substituting these identities into the above expression of $\widehat{M}^D$ and simplifying term by term
yields the desired representation. This completes the proof.
\end{proof}

\begin{corollary}
    Let $\widehat{M}$ be given by \eqref{Dutch-windmill}. If $y_sx_t^T+\varepsilon(\widetilde{y_s}x_t^T+y_s\widetilde{x_t}^T)=0$, for $s,t\in[1,m]$. Then
\begin{align*}
    \widehat{M}^D=\begin{pmatrix}
        BD^{3D}C&BD^{2D}\\D^{2D}C&D^D
    \end{pmatrix}+\varepsilon\begin{pmatrix}
        BD^{3D}C_0+BD^{2D}D_RC+B_0D^{3D}C
        &BD^DD_R+BD_RD^D\\+BD_RD^{2D}C+BD^DD_RD^DC&+B_0D^{2D}\\[6pt]
        D^{2D}C_0+D^DD_RC+D_RD^DC&D_0
    \end{pmatrix}.
\end{align*}
\end{corollary}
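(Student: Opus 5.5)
The corollary should follow by specializing Theorem \ref{DWThm} to the case $\widehat\phi=\widehat C\widehat B=O$.

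\textbf{Step 1: the hypothesis means $\widehat\phi=O$.} The $(s,t)$ block of $\widehat C\widehat B$ is $\widehat{y_s}\widehat{x_t}^T$. Its standard part is $y_sx_t^T$ and its infinitesimal part is $\widetilde{y_s}x_t^T+y_s\widetilde{x_t}^T$. So the assumption, for all $s,t\in[1,m]$, says exactly that $\phi=CB=O$ and $\phi_0=O$.

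\textbf{Step 2: the hypotheses of Theorem \ref{DWThm} hold.} Both conditions there involve the factor $\widehat{y_s}\widehat{x_t}^T$, so they are trivially satisfied. Since the Drazin inverse of the zero matrix is zero, we get $\phi^D=O$, $\phi_R=O$, $\phi^\pi=I$, and $i_\phi=1$. I adopt the convention $\operatorname{Ind}_s(O)=1$ used implicitly in the sums; with index $0$ the sums would be empty.

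\textbf{Step 3: collapse the formula.} In the standard part, only the $i=0$ terms of the sums survive. This gives $BD^{3D}C$, $BD^{2D}$, $D^{2D}C$ and $D^D$. Every term containing $\phi^D$, $\phi^{2D}$, $\phi_R$, $\phi_0$ or $\phi\phi_R$ disappears.

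\textbf{Step 4: the infinitesimal part.} Rather than pruning the long $\xi_j$ expressions, I will recompute directly and more safely. When $\widehat\phi=O$, Theorem \ref{DecompABCO1} (transported through the permutation) gives
\[
\widehat M^D=\begin{pmatrix}\widehat B\widehat D^{3D}\widehat C&\widehat B\widehat D^{2D}\\ \widehat D^{2D}\widehat C&\widehat D^D\end{pmatrix}.
\]
I then expand using $\widehat D^D=D^D+\varepsilon D_R$, $\widehat D^{2D}=D^{2D}+\varepsilon(D^DD_R+D_RD^D)$, and $\widehat D^{3D}=D^{3D}+\varepsilon(D^{2D}D_R+D^DD_RD^D+D_RD^{2D})$, as in the DN-DLS theorem.

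\textbf{Step 5: read off the $\varepsilon$-coefficients blockwise.}
\begin{itemize}
\item Block $(1,1)$: $BD^{3D}C_0+B_0D^{3D}C+B(D^{2D}D_R+D^DD_RD^D+D_RD^{2D})C$, which matches the statement.
\item Block $(1,2)$: $B(D^DD_R+D_RD^D)+B_0D^{2D}$, which matches.
\item Block $(2,1)$: $D^{2D}C_0+(D^DD_R+D_RD^D)C$, which matches.
\item Block $(2,2)$: the infinitesimal part of $\widehat D^D$, which is $D_R$.
\end{itemize}

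\textbf{Main obstacle.} The bookkeeping is routine. The real point is that the $(2,2)$ entry printed as $D_0$ conflicts with the computation, which gives $D_R$. I expect $D_0$ to be a typo for $D_R$ and will prove the statement with $D_R$ in that position. I will also cross-check against Theorem \ref{DWThm}: with $i_\phi=1$, the $j$-sum in $\xi_4$ is empty and every $\phi$-term vanishes, so $\xi_4=O$. This is inconsistent with the direct computation, which is further evidence that the general formula's $\xi$'s should be taken only as a consistency check and that the direct route in Step 4 is the reliable one.
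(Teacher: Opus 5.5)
Your proof is correct and is essentially the paper's route: the corollary is the $\widehat C\widehat B=O$ case of Theorem \ref{DWThm}, which is Theorem \ref{DecompABCO1} transported by the permutation $P$ and then split into standard and dual parts. You are right that the $(2,2)$ dual block is the dual part of $\widehat D^D$, namely $D_R$, so the printed $D_0$ is a typo, and the other three blocks agree with your expansion. Specializing to $\widehat\phi=O$ before expanding is the sound way to do it, because the $\xi_j$ in Theorem \ref{DWThm} expand $(\widehat D^D)^{2i+1}$, $(\widehat D^D)^{2i+2}$, $(\widehat D^D)^{2i+3}$ with the $j$-range $0\le j\le i-1$ instead of $0\le j\le 2i$, $2i+1$, $2i+2$; this is exactly why $\xi_4$ collapses to $O$ there. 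Also, $\operatorname{Ind}(O)=1$ is the genuine index of a zero matrix, not a convention.
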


McDonald et al. \cite{MNSEJAA2022} obtained a representation of the group inverse for the matrix in \eqref{bipartite} only, leaving the case of the matrix in \eqref{Dutch-windmill} unresolved. The results in this section extend their work by deriving explicit representations of the dual Drazin inverses for both matrices \eqref{bipartite} and \eqref{Dutch-windmill}. As a consequence, we present the following corollaries, which are directly from Theorem \ref{DWThm}.
\begin{corollary}
    Let $\widehat{M}$ be defined as in \eqref{Dutch-windmill}. Suppose that $\widehat{C}\widehat{B}$
    and $\widehat{D}$ are group invertible, and that, for all $s,t\in[1,m]$,
    $\widehat{D_{2n}}^{2s}(\widehat{D_{2n}}^s)^{\#}
    \widehat{y}_s\widehat{x}_t^T=0$, and
    $\widehat{D_{2n}}^s\widehat{y}_s
    \widehat{x}_t^T
    =\widehat{y}_s\widehat{x}_t^T
    \widehat{D_{2n}}^t(I-\widehat{D_{2n}}^t
    \widehat{D_{2n}}^{t\#})$. Then
    \begin{align*}
        \widehat{M}^{\#}=\begin{pmatrix}
            B(\phi^\pi D^{3\#}-\phi^\#D^\#-\phi^{2\#}DD^\pi )C&B\phi^\pi D^{2\#}+B\phi^\#D^\pi\\\phi^\pi D^{2\#}C+\phi^\# D^\pi C&\phi^\pi D^{\#}
        \end{pmatrix}+\varepsilon\begin{pmatrix}
            \eta_1&\eta_2\\\eta_3&\eta_4
        \end{pmatrix},
    \end{align*}where
    \begin{align*}
        \eta_1&=B\phi^\pi D^{3\#}C_0+B\phi^\pi D^{2\#}D_RC
        +B\phi^\pi D^{\#}D_RD^{\#}C
        +B\phi^\pi D_RD^{2\#}C+B_0\phi^\pi D^{3\#}C
        \\&-B\phi\phi_RD^{3\#}C-B\phi_0\phi^{\#}D^{3\#}C
        +B\phi^{2\#}D^2D_RC+B\phi^{2D}DD_0D^{\#}C
        -B\phi^{2\#}D_0D^\pi C
    \\&-B\phi^{2\#}DD^\pi C_0-B\phi^{\#}\phi_RDD^\pi C
    +B\phi_R\phi^{\#}DD^\pi C+B_0\phi^{2\#}DD^\pi C
    -B\phi^{\#}D^{\#}C_0
    \\&-B\phi^{\#}D_RC-B\phi_RD^{\#}C-B_0\phi^{\#}D^{\#}C-B_0\phi_RD^{\#}C,\\[4pt]
     \eta_2&=B\phi^\pi D^{\#}D_R+B\phi^\pi D_RD^{\#}+B_0\phi^\pi D^{2\#}-B\phi \phi_RD^{2\#}-B\phi_0\phi^{\#}D^{2\#}+B\phi^{\#}DD_R
     \\&+B\phi^{\#} D_0D^{\#}+B_0\phi^{\#} D^\pi +B\phi_RD^\pi,\\[4pt]
     \eta_3&=\phi^\pi D^{2\#}C_0+\phi^\pi D^{\#}D_RC+\phi^\pi D_RD^{\#}C-\phi\phi_RD^{2\#}C-\phi_0\phi^{\#}D^{2\#}C+\phi^{\#}D^\pi C_0
     \\&+\phi_RD^\pi C-\phi^{\#}DD_RC-\phi^{\#}D_0D^{\#}C,\\[4pt]
     \eta_4&=\phi^\pi D_R-\phi\phi_RD^{\#}-\phi_0\phi^DD^D.
     \end{align*}
\end{corollary}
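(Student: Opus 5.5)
This corollary is the group-inverse specialization of Theorem \ref{DWThm}, and I would obtain it in two steps: check the hypotheses, then collapse the formula.

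\textbf{Hypotheses.} Assume $\widehat D$ and $\widehat\phi=\widehat C\widehat B$ are group invertible, so $i_\phi=1$ and $\operatorname{Ind_s}(\widehat D)=1$. Then $\widehat D^D=\widehat D^\#$, and therefore
\[
(\widehat D_{2n}^s)^e=\widehat D_{2n}^s(\widehat D_{2n}^s)^\#, \qquad (\widehat D_{2n}^s)^\pi=I-\widehat D_{2n}^s(\widehat D_{2n}^s)^\#,
\]
blockwise, since $\widehat D$ is block diagonal with blocks $\widehat D_{2n}^s$. The first assumed condition reads $(\widehat D_{2n}^s)^2(\widehat D_{2n}^s)^\#\widehat y_s\widehat x_t^T=0$, which is $\widehat D_{2n}^s(\widehat D_{2n}^s)^e\widehat y_s\widehat x_t^T=0$. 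The second is literally the commutation condition of Theorem \ref{DWThm}. Hence Theorem \ref{DWThm} applies, and $\widehat M^D$ is given by its formula. I would also note that $\operatorname{Ind_s}(\widehat M)\le 1$ is not needed separately. Once the Drazin inverse is computed, saying it equals $\widehat M^\#$ requires $\widehat M$ to be group invertible. Over $\mathbb{C}$ this follows from the block structure. Alternatively, I would check directly that $\widehat M^2\widehat M^D=\widehat M$ using $i_\phi=1$ and $\operatorname{Ind}(D)=1$, as in the proof of Theorem \ref{DecompABIO1}.

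\textbf{Collapsing the formula.} Set $i_\phi=1$ in every sum of Theorem \ref{DWThm}, so only the $i=0$ term survives.
\begin{itemize}
\item All double sums $\sum_{j=0}^{i-1}$ and $\sum_{j=1}^{i}$ vanish at $i=0$.
\item The standard part becomes
\[
\begin{pmatrix} B\phi^\pi D^{3\#}C-B\phi^{2\#}DD^\pi C-B\phi^\#D^\# C & B\phi^\pi D^{2\#}+B\phi^\#D^\pi\\ \phi^\pi D^{2\#}C+\phi^\#D^\pi C & \phi^\pi D^\#\end{pmatrix},
\]
which is the stated matrix.
\end{itemize}

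\textbf{Infinitesimal parts.} For $\xi_1,\dots,\xi_4$, the sums over $j$ in the expansion of $(\widehat D^D)^{i}$ involve $D^{jD}$ with $j=0$, i.e.\ $D^{0D}=I$. So I would recompute the $\varepsilon$-part of $\widehat\phi^\pi\widehat D^{(2i+r)\#}$ directly, using
\[
(\widehat D^\#)^s = D^{s\#}+\varepsilon\sum_{j=0}^{s-1}D^{j\#}D_RD^{(s-1-j)\#},
\qquad
\widehat\phi^\pi=\phi^\pi-\varepsilon(\phi\phi_R+\phi_0\phi^\#),
\]
then apply the product rule together with $\widehat B=B+\varepsilon B_0$ and $\widehat C=C+\varepsilon C_0$. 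Specifically:
\begin{itemize}
\item The block $(\widehat D^\#)^3$ contributes $D^{2\#}D_R+D^\#D_RD^\#+D_RD^{2\#}$, giving the three $D_R$ terms of $\eta_1$.
\item The block $(\widehat D^\#)^2$ contributes $D^\#D_R+D_RD^\#$, giving those terms in $\eta_2$ and $\eta_3$.
\item The block $\widehat D^\#$ contributes $D_R$, giving $\eta_4=\phi^\pi D_R-\phi\phi_RD^\#-\phi_0\phi^\# D^\#$.
\end{itemize}
The remaining terms (those involving $\phi^{2\#}$, $D^\pi$, $D_0$, $\phi_R$) carry over from $\xi_1,\dots,\xi_3$ verbatim, with $D$ replaced by $\#$.

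\textbf{Main obstacle.} This last step is bookkeeping: matching each term of $\xi_k$ at $i_\phi=1$ with $\eta_k$. The delicate points are to use the correct $\varepsilon$-expansion of powers of $\widehat D^\#$ rather than the literal sums displayed in $\xi_k$, which appear to omit the $j$ range at $i=0$, and to check that no spurious terms survive. I would do this block by block, starting from the standard-part formula above and differentiating formally in $\varepsilon$.
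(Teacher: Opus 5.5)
Your route is the paper's own: the paper gives no separate proof and says the corollary follows directly from Theorem~\ref{DWThm} with $i_\phi=1$. You are also right that the $D_R$-sums shown in $\xi_1,\dots,\xi_4$ have the wrong $j$-range and must be recomputed from the expansion of $(\widehat D^{\#})^s$.

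\textbf{The gap.} The failing step is where you let the remaining terms ``carry over from $\xi_1,\dots,\xi_3$ verbatim''. They do not, and a correct product-rule computation does not give the stated $\eta_2$. Since $\widehat D\widehat D^{\#}=DD^{\#}+\varepsilon(DD_R+D_0D^{\#})$, we have $\widehat D^{\pi}=D^{\pi}-\varepsilon(DD_R+D_0D^{\#})$. Hence the $\varepsilon$-part of $\widehat B\widehat\phi^{\#}\widehat D^{\pi}$ is
\[
B_0\phi^{\#}D^{\pi}+B\phi_RD^{\pi}-B\phi^{\#}DD_R-B\phi^{\#}D_0D^{\#}.
\]
But $\eta_2$, copied from $\xi_2$, contains $+B\phi^{\#}DD_R+B\phi^{\#}D_0D^{\#}$. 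In $\eta_3$ the analogous terms correctly carry a minus sign.

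The other inherited slips are harmless:
\begin{itemize}
\item Because $\widehat D$ is group invertible, $\widehat D\widehat D^{\pi}=O$, and both hypotheses reduce to $\widehat D\widehat C\widehat B=O$. So all terms of $\eta_1$ arising from $\widehat\phi^{2\#}\widehat D\widehat D^{\pi}$ sum to zero, and the wrong signs on $B\phi_R\phi^{\#}DD^{\pi}C$ and $B_0\phi^{2\#}DD^{\pi}C$ do not matter.
\item The term $-B_0\phi_RD^{\#}C$ in $\eta_1$ is a product of two infinitesimal parts and does not belong in a first-order expansion, although it happens to vanish here.
\end{itemize}

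\textbf{The sign in $\eta_2$ makes the statement false.} Take $m=1$, $n=2$, $\widehat x_1=\widehat y_1=e_1\in\mathbb{C}^3$ and $\widehat D_4^1=e_2e_2^T+\varepsilon e_1e_2^T$. This block is idempotent, so it is its own group inverse. Also $\widehat C\widehat B=e_1e_1^T$ is idempotent, and $\widehat D_4^1\widehat y_1=0$, so both hypotheses hold with both sides equal to $0$. Then
\[
\widehat M=\begin{pmatrix}0&1&0&0\\1&0&\varepsilon&0\\0&0&1&0\\0&0&0&0\end{pmatrix},\qquad
\widehat M^{\#}=\begin{pmatrix}0&1&-\varepsilon&0\\1&0&0&0\\0&0&1&0\\0&0&0&0\end{pmatrix}.
\]
This follows from $\widehat M\widehat M^{\#}=\widehat M^{\#}\widehat M=\mathrm{diag}(1,1,1,0)$ together with the vanishing last rows.

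Here $B=C^T=e_1^T$, $B_0=C_0=0$, $\phi=\phi^{\#}=e_1e_1^T$, $\phi_0=\phi_R=0$, $D=D^{\#}=e_2e_2^T$ and $D_0=D_R=e_1e_2^T$. Thus $B\phi^{\pi}=0$, and the stated $\eta_2$ reduces to $B\phi^{\#}D_0D^{\#}=e_2^T$. The corollary therefore predicts the $(1,2)$ block $(1,\varepsilon,0)$ instead of the true $(1,-\varepsilon,0)$; every other block agrees. Carried out correctly, your method proves the corollary only with the signs of $B\phi^{\#}DD_R$ and $B\phi^{\#}D_0D^{\#}$ in $\eta_2$ reversed.

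\textbf{A minor point.} Your claim that group invertibility of $\widehat M$ ``follows from the block structure'' is not automatic for general blocks: $\begin{pmatrix}0&1\\0&0\end{pmatrix}$ satisfies the analogous hypotheses. It works here only because $\widehat B,\widehat C$ are a nonzero row and column, so group invertibility of $\widehat C\widehat B$ forces the scalar $BC\neq0$ unless $B=0$ and $C=0$. Your direct check of $\widehat M^2\widehat M^{D}=\widehat M$ is the safer route.
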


\begin{corollary}
    Let $ \widehat{M}'$ be given by \eqref{bipartite}. Then
    \begin{align*}
        \widehat{M}'^D=\begin{pmatrix}
            O&E(FE)^D\\(FE)^DF&O
        \end{pmatrix}+\varepsilon\begin{pmatrix}
            O&E(FE)_R+E_0(FE)^D\\(FE)^DF_0+(FE)_RF&O
        \end{pmatrix}
    \end{align*}
\end{corollary}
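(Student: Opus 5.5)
The plan is to obtain the formula from Corollary \ref{OBCO} and then split the result into standard and infinitesimal parts.

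\textbf{Standard-level formula.} Applying Corollary \ref{OBCO} to $\widehat{M}'$ with $\widehat{B}=\widehat{E}$ and $\widehat{C}=\widehat{F}$ gives
\begin{align*}
\widehat{M}'^D=\begin{pmatrix} O&(\widehat E\widehat F)^D\widehat E\\ \widehat F(\widehat E\widehat F)^D&O\end{pmatrix}.
\end{align*}
This requires $\widehat E\widehat F$ to be dual Drazin invertible. Since the statement is phrased through $\widehat F\widehat E$, I will first use Cline's formula (Lemma \ref{cline}) to rewrite the off-diagonal blocks. The identities I need are
\[
(\widehat E\widehat F)^D\widehat E=\widehat E(\widehat F\widehat E)^{2D}\widehat F\widehat E=\widehat E(\widehat F\widehat E)^D
\]
and
\[
\widehat F(\widehat E\widehat F)^D=\widehat F\widehat E(\widehat F\widehat E)^{2D}\widehat F=(\widehat F\widehat E)^D\widehat F.
\]
The last equality in each line uses that $(\widehat F\widehat E)^D$ commutes with $\widehat F\widehat E$ and satisfies $\widehat X\widehat A\widehat X=\widehat X$. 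Lemma \ref{cline} needs $\widehat F\widehat E\in\mathbb{DC}_z$, so that its Drazin inverse exists. I will take this as the implicit standing assumption, in line with $\widehat{M}'\in\mathbb{DC}_z$ and with how the corollary is stated.

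\textbf{Expanding into parts.} With $\widehat F\widehat E=FE+\varepsilon(FE_0+F_0E)$, the formula of \cite{ZZFil2023} quoted in Section 2 gives
\[
(\widehat F\widehat E)^D=(FE)^D+\varepsilon(FE)_R,
\]
where $(FE)_R$ is the $A_R$-expression built from $A=FE$ and $A_0=FE_0+F_0E$. Multiplying out and using $\varepsilon^2=0$:
\begin{align*}
\widehat E(\widehat F\widehat E)^D&=E(FE)^D+\varepsilon\bigl(E(FE)_R+E_0(FE)^D\bigr),\\
(\widehat F\widehat E)^D\widehat F&=(FE)^DF+\varepsilon\bigl((FE)^DF_0+(FE)_RF\bigr).
\end{align*}
Placing these in the $(1,2)$ and $(2,1)$ blocks gives the stated formula.

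\textbf{Main obstacle.} The delicate step is the existence/hypothesis issue: justifying that $\widehat F\widehat E$ lies in $\mathbb{DC}_z$ so that Lemma \ref{cline} and the $A_R$-formula apply. The plan is to treat this as part of the setting. If it had to be derived, I would try to obtain it from $\widehat{M}'\in\mathbb{DC}_z$ via $\widehat{M}'^2=\operatorname{diag}(\widehat E\widehat F,\widehat F\widehat E)$. Everything after that is routine.
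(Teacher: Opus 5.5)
Your algebra is correct. The two Cline-type identities and the $\varepsilon$-expansion of $\widehat E(\widehat F\widehat E)^D$ and $(\widehat F\widehat E)^D\widehat F$ reproduce the stated blocks exactly. So, taking the paper's lemmas as stated, your argument proves the corollary, but by a different route from the paper's.

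The paper reads the corollary off Theorem \ref{DWThm} (i.e.\ Theorem \ref{DecompABCO1} after the block permutation) with diagonal block $\widehat D=O$. The hypotheses then become vacuous, and $D^D=O$, $D^\pi=I$, $D_R=D_0=O$. The standard part collapses to $B\phi^D$ and $\phi^DC$ with $\widehat\phi=\widehat F\widehat E$. For the infinitesimal part, $\xi_1=\xi_4=O$, $\xi_2=E(FE)_R+E_0(FE)^D$ and $\xi_3=(FE)^DF_0+(FE)_RF$. This lands directly in the $\widehat F\widehat E$ orientation. Lemma \ref{cline} appears only afterwards, in the opposite direction, to match the $(EF)^{\#}E$ form of McDonald et al.

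You instead start from Corollary \ref{OBCO} in the $\widehat E\widehat F$ orientation and switch with Cline. That is shorter and makes the $\varepsilon$-bookkeeping a two-line computation. The cost is that $(\widehat E\widehat F)^D$ becomes an intermediate object whose existence is an extra requirement.

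That extra requirement is not harmless, because Lemma \ref{cline} is not a reliable source for it. Its proof presupposes $\Ind_s(\widehat A\widehat B)=\Ind_s(\widehat B\widehat A)+1$, and the existence transfer can fail. With $3\times 3$ matrix units $e_{ij}$, take $\widehat E=e_{11}+e_{22}$ and $\widehat F=e_{31}+\varepsilon e_{23}$. Then:
\begin{itemize}
\item $\widehat F\widehat E=e_{31}$ is purely standard and nilpotent, so it lies in $\mathbb{DC}_z$ with dual Drazin inverse $O$.
\item $\widehat E\widehat F=\varepsilon e_{23}$ has standard part $O$ of index $1$, and $\varepsilon e_{23}\widehat X\varepsilon e_{23}=O\neq\varepsilon e_{23}$, so $\widehat E\widehat F\notin\mathbb{DC}_z$.
\item $\widehat{M}'^3=O$ with $\Ind(M')=3$, so $\widehat{M}'\in\mathbb{DC}_z$ with $\widehat{M}'^D=O$, which is exactly what the stated formula gives.
\end{itemize}
So the corollary holds here, but your first step (Corollary \ref{OBCO} with $\widehat B\widehat C=\widehat E\widehat F$) is unavailable.

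The repair is to apply Corollary \ref{OBCO} to the block-permuted matrix $\begin{pmatrix}O&\widehat F\\ \widehat E&O\end{pmatrix}$. This gives $\begin{pmatrix}O&(\widehat F\widehat E)^D\widehat F\\ \widehat E(\widehat F\widehat E)^D&O\end{pmatrix}$, and permuting back yields the statement. There is then no Cline step and no hypothesis on $\widehat E\widehat F$; this is in effect what the paper's specialization does.

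Finally, interchanging $\widehat E$ and $\widehat F$ in the same example gives $\widehat{M}'\in\mathbb{DC}_z$ but $\widehat F\widehat E\notin\mathbb{DC}_z$. So your fallback of deducing $\widehat F\widehat E\in\mathbb{DC}_z$ from $\widehat{M}'^2=\operatorname{diag}(\widehat E\widehat F,\widehat F\widehat E)$ cannot work. $\widehat F\widehat E\in\mathbb{DC}_z$ has to remain a standing hypothesis, as the presence of $(FE)_R$ in the formula already presupposes.
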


The above corollary generalizes the result obtained in \cite{MNSEJAA2022}. For completeness, we recall it below.
\begin{corollary}
    Let $ \widehat{M}'$ be given by \eqref{bipartite} over $\mathbb{R}$. Then
    \begin{align*}
        \widehat{M}'^{\#}=\begin{pmatrix}
            O&(EF)^{\#}E\\F(EF)^{\#}&O
        \end{pmatrix}.
    \end{align*}
\end{corollary}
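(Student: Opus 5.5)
This statement is the real-case group-inverse formula of McDonald et al., recovered as a specialization of the preceding corollary on $\widehat{M}'^D$. I would deduce it directly from that corollary, which itself comes from Corollary~\ref{OBCO} applied to $\widehat{M}'=\begin{pmatrix}O&\widehat E\\ \widehat F&O\end{pmatrix}$.

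First, specialize to matrices over $\mathbb{R}$. Set $E_0=O$ and $F_0=O$. Then $\phi_0=(FE)_0=O$, so the correction term $(FE)_R$ vanishes: every summand in the formula for $A_R$ contains $A_0$. Hence the infinitesimal part of $\widehat{M}'^D$ is zero. By Corollary~\ref{OBCO}, or equivalently the corollary just above after swapping the roles of $B$ and $C$, we get
\[
M'^D=\begin{pmatrix}O&(EF)^DE\\ F(EF)^D&O\end{pmatrix}.
\]
Note that Corollary~\ref{OBCO} produces the form with $EF$; this is the form we want. The preceding corollary's form with $FE$ converts to it by Cline's identity $E(FE)^D=(EF)^DE$. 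That identity follows from Lemma~\ref{cline}: both sides equal $E(FE)^{2D}FE$, which reduces via $(EF)^D=E(FE)^{2D}F$.

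Second, pass from Drazin to group inverse. The cited setting is the bipartite adjacency matrix of $D_{2n}^m$, which is symmetric and real over $\mathbb{R}$. Symmetric matrices have index at most $1$, so $M'^\#$ exists and coincides with $M'^D$.

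It remains to replace $(EF)^D$ by $(EF)^\#$, i.e.\ to show $\operatorname{Ind}(EF)\le1$. Since $M'$ has index at most $1$, $M'^2=\operatorname{diag}(EF,FE)$ also has index at most $1$. Its diagonal block $EF$ therefore has index at most $1$, so $(EF)^D=(EF)^\#$. This gives the stated formula.

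The only delicate point is this index argument, in particular justifying that the hypothesis "over $\mathbb{R}$" implicitly includes the symmetric, group-invertible setting of McDonald et al. If that assumption is not intended, I would simply state that the group inverse exists whenever $EF$ is group invertible. I would then verify the three group-inverse equations for the displayed matrix directly, using $(EF)^\#EF(EF)^\#=(EF)^\#$ and the intertwining $E(FE)^\#=(EF)^\#E$.
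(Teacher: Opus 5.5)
Your main line is the paper's own argument. Setting $E_0=F_0=O$ kills the $\varepsilon$-part, and Lemma~\ref{cline} gives $E(FE)^D=E(FE)^{2D}FE=(EF)^DE$ and $(FE)^DF=F(EF)^D$. One then reduces to the case where the index equals one. Your step that $\operatorname{Ind}(M')\le 1$ forces $M'^2=\operatorname{diag}(EF,FE)$, and hence $EF$, to have index at most one is correct. It fills in a detail the paper leaves implicit.

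The fallback in your last paragraph is false. Group invertibility of $EF$ does not make $M'$ group invertible, because $\operatorname{Ind}(FE)$ can exceed $\operatorname{Ind}(EF)$. Take $E=\begin{pmatrix}1&0\end{pmatrix}$ and $F=\begin{pmatrix}0&1\end{pmatrix}^T$. Then $EF=0$ is group invertible. But $M'=\begin{pmatrix}0&1&0\\0&0&0\\1&0&0\end{pmatrix}$ maps $e_2\mapsto e_1\mapsto e_3\mapsto 0$, so $M'$ is nilpotent of index $3$. The displayed formula returns $X=O$, which fails $M'XM'=M'$. Your intertwining $E(FE)^{\#}=(EF)^{\#}E$ also presupposes that $(FE)^{\#}$ exists.

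The correct hypothesis follows from $\operatorname{rank}M'=\operatorname{rank}E+\operatorname{rank}F$ and $\operatorname{rank}M'^2=\operatorname{rank}(EF)+\operatorname{rank}(FE)$. It is $\operatorname{Ind}(M')\le 1$, equivalently $\operatorname{rank}E=\operatorname{rank}F=\operatorname{rank}(EF)=\operatorname{rank}(FE)$. This is exactly what the notation $M'^{\#}$, and the paper's phrase ``when the index equals one,'' presuppose.

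Under that hypothesis your $M'^2$ argument goes through with no appeal to symmetry. That is the version you should use. In this paper $\widehat{M}'$ is the adjacency matrix of a real-weighted digraph, which need not be symmetric. So the symmetric, McDonald et al.\ justification covers only the special undirected case.
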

By Lemma \ref{cline}, we obtain $E(FE)^D=E(FE)^{2D}FE=(EF)^DE$ and $(FE)^DF=FE(FE)^{2D}F=F(EF)^D$, which reduce to the corresponding result for the group inverse when the index equals one.

\section*{Funding}
\begin{itemize}
\item Daochang Zhang is supported by the National Natural Science Foundation of China (NSFC) (No. 11901079), and China Postdoctoral Science Foundation (No. 2021M700751), and the Scientific and Technological Research Program Foundation of Jilin Province (No. JJKH20190690KJ; No. JJKH20220091KJ; No. JJKH20250851KJ).
\end{itemize}

\section*{Conflict of Interest}
The authors declare that they have no potential conflict of interest.

\section*{Data Availability}
Data sharing not applicable to this article as no datasets were generated or analyzed during the current study.


\bibliographystyle{abbrv}

\end{document}